\newtheorem{definition}{Definition}[section]
\newtheorem{corollary}[definition]{Corollary}
\newtheorem{example}[definition]{Example}
\newtheorem{lemma}[definition]{Lemma}
\newtheorem{proposition}[definition]{Proposition}
\newtheorem{theorem}[definition]{Theorem}
\newtheorem{remark}[definition]{Remark}
\newtheorem{remarks}[definition]{Remarks}
\newtheorem{notation}[definition]{Notation}
\newcommand{\hm}{\hspace*{.5cm}}
\newcommand{\huc}{\hspace*{.1cm}}
\newcommand{\n}{\neg} 
\newcommand{\su}{\mbox{$\huc\subseteq\huc$}}
\newcommand{\vaz}{{\emptyset}}                    
\newcommand{\bsq}{\blacksquare}
\newcommand{\bc}{\begin{center}}
\newcommand{\ec}{\end{center}}
\newcommand{\ol}{\overline}
\newcommand{\IF}{{I\kern-0.3emF}}
\newcommand{\IK}{{I\kern-0.3emK}}
\newcommand{\IN}{\mathbb{N}}
\newcommand{\IQ}{{I\kern-0.3emQ}}
\newcommand{\IR}{{I\kern-0.3emR}}
\newcommand{\K}{{\cal K}}
\newcommand{\lc}{{{\mathcal L}}}
\newcommand{\V}{\mathcal V}
\newcommand{\lra}{\mbox{$\longrightarrow$}}
\newcommand{\rar}{\rightarrow}
\newcommand{\lan}{\langle}
\newcommand{\ran}{\rangle}
\newlength{\chico}
\newcommand{\dps}{\displaystyle}
\begin{document}
\title{Fibring by functions as a method for combining matrix logics}
\author{V\'\i ctor L. Fern\'andez $^{\ast}$ and Marcelo E. Coniglio $^{\dag}$\\\\
\small $^{\ast}$ Basic Sciences Institute (Mathematical Area)\\
\small National University of San Juan\\
\small Av. Ignacio de la Roza 230 (O), San Juan, Argentina \\
\small E-mail:~{\tt vlfernan@ffha.unsj.edu.ar}\\
\small {$^{\dag}$} Centre for Logic, Epistemology and the History of Science (CLE) and\\ 
{\small Institute of Philosophy and the Humanities  (IFCH)}\\
\small University of Campinas (UNICAMP)\\ 
\small R. Cora Coralina 100, 13083-896, Campinas, SP, Brazil\\
\small E-mail:~{\tt coniglio@unicamp.br}\\
}
\maketitle

\begin{abstract}
We present in this paper an adaptation of the process of combination of logics known as fibring introduced by D. Gabbay. We are focused on the combination of two logics defined by matrix semantics, and based on pairs of functions that relate the logics to be combined. A number of technical results are proved. Among them, we demonstrate that the fibring of two matrix logics is also a matrix one. In addition, we prove that fibring is a (weak) conservative extension of the original logics, and we give conditions for such extension to be strong. We also study the case of fibring identifying two connectives as being the same. Several examples referred to fibring of some well-known matrix logics are shown along this paper.
\end{abstract}

\

\small \noindent {\em Keywords:} {Many-Valued Logics; Combination of Logics; Fibring.}

\small \noindent {\em MSC 2010:} {03B62; 03B50}

\normalsize

\section*{Introduction}

The method of combination of logics known as  {\em fibring} was introduced by D. Gabbay in the 90{'}s, with the aim of
combining logics having Kripke semantics, such as intuitionistic
and modal logics (see \cite{gab:96} and \cite{gab:99}).
The underlying idea of this technique is, roughly speaking, the following:
given two logics $\lc_1$ and $\lc_2$ admitting Kripke semantics
($Kr_1$ and $Kr_2$ respectively), a new logic $\lc_1 \circledast
\lc_2$ is defined over the language obtained from the union of
the connectives of both $\lc_1$ and $\lc_2$. So, the formulas of
$\lc_i$ ($i=1,2$) are also formulas of $\lc_1 \circledast \lc_2$.
But in the new logic there exist 
``hybrid formulas'' as we called them, obtained by mixing the connectives of the
two original logics. Thus, it is possible to evaluate a connective
$c$ of the language of $\lc_1$ applied to formulas of the
language of $\lc_2$. This can be done by means of a pair of functions: the first one
associates, to each world of any model of $Kr_1$, a world
of a model of $Kr_2$; meanwhile the second one works from $Kr_2$ to $Kr_1$. Taking into account  {\em all the possible pairs, of functions between $Kr_1$ and $Kr_2$}, the hybrid formulas can now be evaluated. Moreover, $\lc_1
\circledast \lc_2$ is a weak extension of $\lc_1$ and $\lc_2$, which means that the tautologies of the original logics are also tautologies of the new one.

Right after the introduction of fibring in the literature, the
original notion was investigated and modified by several authors.
In particular, an interesting adaptation of fibring using the
framework of Category Theory was introduced  in~\cite{ser:ser:cal:99}. Following~\cite{car:con:gab:gou:ser:08}, we will refer to it as {\em categorial} or {\em algebraic fibring}. This new approach to fibring was widely studied afterwards (see~\cite{car:con:07} for a survey on fibring and other methods for combining logics). In the present article, we return to the original definition of fibring (which, following~\cite{car:con:gab:gou:ser:08} once again, we will call it  {\em fibring by functions}), but in a modified presentation: instead of dealing with Kripke semantics, we will deal exclusively with logics  presented by means of  matrix semantics. The abstract theory of logical matrices was mainly developed by the Polish school of logic (see, for instance, \cite{bro:sus:73}, \cite{woj:84} and, more recently, \cite{fon:16}). In the case of finite matrices, it can be seen as an abstract version of the method of truth-tables for propositional logics.  In this paper we will work mainly with fibring applied to the case of finite matrices, indeed. 
It should be mentioned that the combination by fibring of logical matrices was already addressed in the literature, under different perspectives. A first approach from the point of view of fibring by functions was proposed by us in~\cite{con:fer:05} and~\cite{fer:con:14}. From the perspective of algebraic fibring, in~\cite{mar:car:17a} it was proven that a logic which is
uncharacterizable by a single  (finite or infinite) matrix could be obtained by means of the algebraic fibring of two  logics, each of one given by a single finite matrix. A possible solution to this problem was proposed in~\cite{mar:car:17b}, where the semantics of unconstrained (algebraic) fibring of logics given by nondeterministic matrix semantics was 
characterized.\footnote{Recall that a {\em nondeterministic matrix} (or simply an {\em Nmatrix}) is a logical matrix in which each connective is interpreted by means of a {\em multifunction}, that is, a function which, for any input, can produce a (nonempty) set of truth-values instead of a single truth-value.}

The method proposed here (which is based on~\cite{con:fer:05} and~\cite{fer:con:14}) is defined as follows. First, consider
two (propositional) logics $\lc_i$, defined by the signatures (sets of connectives) $C_i$ ($i = 1,2$), whose respective consequence relations $\models_{M_i}$ are defined by matrices $M_i = (A_i, D_i)$. To define a new ``mixed logic'',  its 
underlying language is
obtained from the original languages of both logics, with the addition of the hybrid formulas. Now, consider a fixed pair $(\lambda,\mu)$ of maps, $\lambda: A_1 \to A_2$, $\mu: A_2 \to A_1$. These functions will allow us to alternate between valuation (or truth-values) over $M_1$ and valuations (or truth-values) over $M_2$. Then, we can obtain
the truth-value of any hybrid formula of the mixed language. In this way, we can define a consequence relation on the mixed language which allows us to define a new logic, denoted by $[\lc_1 \circledast \lc_2]_{(\lambda,\mu)}$, called the  {\em fibring by functions $(\lambda,\mu)$} of $\lc_1$ and $\lc_2$.  In addition, we will study some interesting technical questions, such as the problem of the conservative extension (of the original consequence relations), and the identification of two connectives in the combined logic. 

\section{Basic Notions}

In this section we recall well-known definitions and results to be used throughout the paper. For references about this consult, for instance, \cite{woj:84}, \cite{bur:san:81} and~\cite{fon:16}.

\begin{definition}\label{definicion-basica}
Consider a fixed, countable set $\V = \{p_1, p_2, \ldots \}$ of symbols called {\em propositional variables}.\\
$(a)$ A {\em propositional
signature} is a family $C = \{C^k\}_{k \in \IN}$ of pairwise disjoint sets such that
$C^k\cap \V=\emptyset$ for every $k$. The  {\em
domain of the signature $C$} is the set $|C|:=\bigcup_{k \in \IN}
C^k$. Given two signatures $C_1$ and $C_2$, we say that {\em
$C_1$ is a subsignature of $C_2$} (indicated by $C_1 \sqsubseteq C_2$) if, for
every $k \in \IN$, $C^k_1 \su C^k_2$. The signatures $C_1 \cup
C_2$ and $C_1\uplus C_2$ (the union and the
disjoint union of $C_1$ and $C_2$, respectively) are defined as expected.\\
$(b)$ The  {\em propositional language over signature $C$} (denoted
by $L(C)$) is the absolutely free algebra of words generated by $C$ over
$\V$, considering each set $C^k$ as the set of $k$-ary operations
of that algebra. If $C_1 \sqsubseteq  C_2$ we say that $L(C_1)$ is  {\em a fragment of $L(C_2)$}. Elements in $L(C)$ are called {\em formulas}, while elements in $\V \cup C_0$ are called {\em atomic formulas}.
\end{definition}

\noindent For every formula $\varphi \in L(C)$, $\varphi(p_1,\dots,p_n)$ denotes that the propositional variables appearing in $\varphi$ belong to the set $\{p_1,\dots,p_n\}$.

An algebra over $C$ (or a $C$-algebra) will be denoted by ${\bf A} = (A,C^{\bf A})$, where $c^{\bf A}: A^k \to A$ is  the operation associated to $c \in C^k$. The set $C^{\bf A}$ will be often denoted by $C$, and every operation $c^{\bf A}$ will be denoted simply by $c$, if the context is clear enough.

\begin{definition} \label{c-matrices} {Given a signature $C$, a  {\em $C$-matrix} is a pair $M = ({\bf A},D)$, where ${\bf A} = (A,C^{\bf A})$ is a $C$-algebra, and $D \su A$. Every operation $c^{\bf A}$ of $C^{\bf A}$ will be called a  {\em truth-function of $M$}.} 
\end{definition}

\noindent Given a $C$-matrix $M = ({\bf A},D)$ and a formula $\varphi(p_1,\dots,p_n)$  we may write $\varphi^{\bf A}(x_1,\dots,x_n)$ or $\varphi^{\bf A}(\vec{x})$ to denote the truth function associated to $\varphi$ in {\bf A}. When there is no risk of confusion, the superscript in $\varphi^{\bf A}$ may be omitted.

\begin{definition}\label{valuaciones-matrices} Let $M = ({\bf A},D)$ be a $C$-matrix.\\
(1) An  {\em $M$-valuation} is a homomorphism $v:L(C) \lra A$.\\
(2) The {\em consequence relation 
$\models_{M} \su \wp(L(C)) \times L(C)$, induced by M} is given by: $(\Gamma,\varphi) \in \, \, \models_{M}$ iff, for every $M$-valuation $v$, 
if $v(\Gamma) \su D$, then $v(\varphi) \in D$. As usual, we denote $(\Gamma,\varphi) \in \, \, \models_{M}$ by $\Gamma \models_{M} \varphi$.\\
(3) Let $\K$ be a class of $C$-matrices. The  {\em consequence relation $\models_{\K}$} is defined as: $\Gamma \models_{\K} \varphi$ iff $\Gamma \models_{M} \varphi$ for every matrix $M$ of $\K$. That is, $\models_{\K} = \bigcap \limits_{M \in \K} \models_M$.
\end{definition} 

\noindent  Since $L(C)$ is an absolutely free algebra, an $M$-valuation is completely characterized by a function  $v:\V \lra A$.
 
Recall that a {\em Tarskian logic} is a pair $\mathcal{L}= \lan F,\vdash\ran$ such that  $F$ is a nonempty set (of {\em formulas}) and ${\vdash}\subseteq \wp(F)\times F$ is a {\em consequence relation} satisfying the following:~(i) $\varphi \in \Gamma$ implies that $\Gamma \vdash \varphi$; (ii)~if $\Gamma \vdash \varphi$ and $\Gamma \subseteq \Delta$ then $\Delta \vdash \varphi$; and~(iii)~if $\Gamma \vdash \varphi$, for every $\varphi \in \Delta$, and $\Delta \vdash \psi$ then $\Gamma \vdash \psi$. A logic $\mathcal{L}$ is said to be {\em finitary} if $\Gamma \vdash \varphi$ implies that  $\Gamma_0 \vdash \varphi$ for some finite $\Gamma_0 \subseteq \Gamma$; and  $\mathcal{L}=\lan L(C),\vdash\ran$ is {\em structural} if $\Gamma \vdash \varphi$ implies $\sigma(\Gamma) \vdash \sigma(\varphi)$ for every substitution $\sigma$ over $C$ (that is, every endomorphism $\sigma: L(C) \lra L(C)$;  again, since $L(C)$ is an absolutely free algebra, it is enough considering a function $\sigma: \V \lra L(C)$). Finally, $\mathcal{L}$ is {\em standard} if it is Tarskian, finitary and structural. 
An structural logic $\mathcal{L}=\lan L(C),\vdash\ran$ will be alternatively denoted by $\mathcal{L}=\lan C,\vdash\ran$.

Given $\varphi(p_1,\dots,p_n)$ and a substitution $\sigma$ such that $\sigma(p_i)=\alpha_i$,  $\sigma(\varphi)$ will be denoted by $\varphi(\alpha_1,\dots,\alpha_n)$ or even by $\varphi(\vec{\alpha})$, where  $\vec{\alpha}$ denotes $(\alpha_1,\dots,\alpha_n)$. 

\begin{remark} \label{matriz-es-estructural} It is immediate to prove that $\lc= \lan L(C),\models_{\K} \ran$ is Tarskian and structural. More than this, W\'ojcicki proves in {\rm~\cite{woj:70}} that every Tarskian and  structural logic over a signature $C$ can be characterized  by a class of logical matrices over $C$. In addition,  W\'ojcicki has shown in {\rm~\cite{woj:73}} that, if $\K$ is a finite class of finite matrices (that is, matrices with finite universes), then $\lc= \lan C,\models_{\K} \ran$ is finitary, and therefore standard. From now on, every  logic of the form $\lc = \lan C,\models_{\K} \ran$ will be called a {\em matrix logic}.
\end{remark}

\section{Unconstrained Fibring over Matrix Logics}

Consider two matrix logics $\lc_i = \lan C_i, \models_{M_i} \ran$ ($i = 1,2$). Now suppose that we want to define a new logic $\lc$, from $\lc_1$ and $\lc_2$, with the following properties:
\begin{itemize}
  \item[-] The signature $C$ of $\lc$ is $C_1 \uplus C_2$, the disjoint union of $C_1$ and $C_2$. 
  \item[-] $\lc$ is defined by a matrix logic $M$, being $M$ determined (in some reasonable way) by $M_1$ and $M_2$. 
  \end{itemize}
The fundamental problem here (that we could call ``the problem of the combinations of the languages'') arises from the conjunction of the requirements above indicated: if we have a formula $\varphi \in L (C_1 \uplus C_2 )$, and we wish to eva\-luate it, a first approach suggests to consider $A:= A_1 \uplus A_2$ as the support of $M$. The $M$-valuations would be, now, $M_1$-valuations or  $M_2$-valuations, according the internal structure of 
$\varphi$. Obviously, a new problem appears in this case: what to do when $\varphi$ is a hybrid formula, i.e., generated by connectives of both signatures $C_1$ and $C_2$? Clearly, the set of $M$-valuations should be defined considering at the same time the sets of $M_1$-valuations   and $M_2$-valuations, and with some kind of  interaction rules between them. A possible answer (of course, not the only one) we propose here is inspired by the so-called ``Gabbay approach'' to the problem of combinations of modal logics, and we called here the  {\em (unconstrained) fibring by functions}. Roughly speaking, this process is determined by  pairs $(\lambda,\mu)$ of functions; 
$\lambda: A_1 \lra A_2$, $\mu: A_2 \lra A_1$. These functions ``translate'', when necessary, the truth-values of $M_1$ and  $M_2$. In formal terms:

\begin{definition} \label{gfibringmatriz} Let $\lc_i = \langle
C_{i}, \models_{M_i} \rangle$ be two matrix logics determined by the $C_i$-matrices $M_i = ({\bf A}_i, D_i)$, where ${\bf A}_i = (A_i, C_i^{{\bf A}_i})$ ($i=1,2$) . The {\em fibred signature} (induced by $C_1$ and $C_2$) is $C_{\circledast}:= C_1 \uplus
C_2$. The language $L(C_{\circledast})$ will be called the {\em fibred language}. Every pair of functions $(\lambda, \mu) \in {A_2}^{A_1} \times {A_1}^{A_2}$ is called a  {\em fibring pair}.
\end{definition} 

\begin{definition}\label{sf-valuation}
(1) Let $(\lambda^1_2,\lambda^2_1)$ be  a fibring pair. A  {\em simple fibred valuation} (in short, {\em s.f.v}) is any map $v: \V \lra A_{\circledast}$, where
$A_{\circledast}:= A_1 \uplus A_2$. This function $v$ is extended to a function $\ol{v}: L(C_{\circledast}) \lra A_{\circledast}$ (which is determined by $v$ and $(\lambda^1_2,\lambda^2_1)$), according to the following recursive definition:
\begin{itemize}
\item If $\varphi \in \V$ then $\ol{v}(\varphi)$ =
  $v(\varphi)$;
  \item If $\varphi = c(\psi_1,\ldots,\psi_k)$ such that $c \in C_i^k$ then\footnote{Observe that there is a unique $i \in \{1,2\}$  such that $c \in C_i^k$.}
$$\ol{v}(\varphi) = c^{{\bf A}_i}(\bar{\lambda}^i_j(\ol{v}(\psi_1)),\ldots,\bar{\lambda}^i_j(\ol{v}(\psi_k))$$
where,  $\bar{\lambda}^i_j:A_{\circledast} \lra A_1 \cup A_2$ is defined as follows: $\bar{\lambda}^i_j(a)=a$ if $a \in A_i$, and $\bar{\lambda}^i_j(a)= \lambda^j_i(a)$ if $a \in A_j$.
\end{itemize}

\noindent (2) Let $D_{\circledast}$:=$D_1 \circledast D_2$. Consider  a fibring pair $(\lambda,\mu)$. An s.f.v  $\ol{v}$  {\em satisfies $\varphi$} if $\ol{v}(\varphi) \in D_{\circledast}$.\\
(3) The  {\em fibred consequence relation
$\models^{\circledast}_{(\lambda,\mu)} \su \wp(L(C_{\circledast})) \times
L(C_{\circledast})$} is defined as follows: $\Gamma \models^{\circledast}_{(\lambda,\mu)} \varphi$ if, for every s.f.v $\ol{v}$ satisfying simultaneously every formula in $\Gamma$, it is the case that $\ol{v}$ satisfies $\varphi$. Finally, the pair
$[\lc_1 \circledast \lc_2]_{(\lambda,\mu)} = \lan C_{\circledast}, \models^{\circledast}_{(\lambda,\mu)}\ran$ will be called the (weak and unconstrained)\footnote{The meaning of the expressions ``weak'' and ``unconstrained'' will be clear later. The notation $[\lc_1 \circledast \lc_2]_{(\lambda,\mu)} = \lan C_{\circledast}, \models^{\circledast}_{(\lambda,\mu)}\ran$ will be clear after Corollary~\ref{fibring-estru-finitario}.}  {\em fibring by functions  $(\lambda,\mu)$ of
$\lc_1$ and $\lc_2$}.
\end{definition}

\begin{example} \label{P1-1}
Consider the paraconsistent logic $P^1$, introduced
in {\rm~\cite{set:73}}. Its signature is such that $|C_{P^1}|$
= $\{\n_{P^1}, \rar_{P^1} \}$, with  arities as expected. The logic $P^1$ is determined by the matrix $M_{P^1}$ =
$({\bf A}_{P^1}, \{T,T_1\})$ such that ${\bf A}_{P_1}$ has the support $A_{P^1}=\{T, T_1,
F\}$, and the truth-functions corresponding to $\n_{P^1}$ and
$\rar_{P^1}$ are defined through the tables below.

\

$$\begin{array}{|c|c| c| c|} \hline & T & T_1 & F \\
\hline \neg_{P^1} & F & T & T  \\ \hline \end{array} \hspace{2cm}
\begin{array}{|c|c |c |c|} \hline \rar_{P^1} & T & T_1 & F \\ \hline T &
T & T & F\\ \hline T_1 & T & T & F\\ \hline F & T & T & T \\
\hline
\end{array} \hm$$

\

\noindent Now, let $CPL$ be the classical propositional logic,
defined over the signature $C_{CPL}$ such that $|C_{CPL}|$ =
$\{\wedge_{CPL}, \neg_{CPL}\}$ (with usual arities). The matrix semantics for $CPL$ is based on the well-known matrix ${M}_{CPL}:= ({\bf 2},\{1\})$, where ${\bf 2}$ = 
$(2,C_{CPL})$, $2:= \{0,1\}$, and the truth-functions related to $\wedge_{CPL}$ and to $\n_{CPL}$ are the usual. Now, let $(\lambda,\mu)$ be a fibring a
pair defined by:

\begin{itemize}
  \item $\lambda(T)=1$; $\lambda(T_1) = 1$; $\lambda(F) = 0$; 
\item $\mu(1)=T$; $\mu(0)=F$.
\end{itemize}

\noindent Consider the (hybrid) formula $\varphi$ :=
$p \rar_{{\!\!\!\!}_{P^1}} (\neg_{{\!\!}_{P^1}} r \wedge_{{\!}_{CPL}}(q \wedge_{{\!}_{CPL}} \neg_{{\!}_{CPL}} r))$ (using standard infix notation) and let  $v$ be an s.f.v. such that $v(p) = T$, $v(q) = F$ and $v(r) = 0$. From this we have, by Definition~\ref{sf-valuation}:

\

\noindent $\begin{array}{ll} \overline{v}(\varphi) & = 
T {\rar_{{\!\!\!\!}_{P^1}}} \mu (\lambda(\neg_{{\!\!}_{P^1}} \mu(0)) \wedge_{{\!}_{CPL}}(\lambda(F) \wedge_{{\!}_{CPL}} \neg_{{\!}_{CPL}} 0))\\
& =  T \rar_{{\!\!\!\!}_{P^1}} \mu (\lambda (\neg_{{\!\!}_{P^1}} F) \wedge_{{\!}_{CPL}} (0 \wedge_{{\!}_{CPL}} 1))\\
& =  T \rar_{{\!\!\!\!}_{P^1}} \mu (\lambda (T) \wedge_{{\!}_{CPL}} 0)\\
& = 
T \rar_{{\!\!\!\!}_{P^1}} \mu (1 \wedge_{{\!}_{CPL}} 0)\\
& = 
T \rar_{{\!\!\!\!}_{P^1}} \mu (0) = T \rar_{{\!\!\!\!}_{P^1}} F = F.
\end{array}
$ 

\hfill{$\bsq$}
\end{example}

\begin{remark}\label{no-es-homom} The functions $\ol{v}: L(C_{\circledast}) \lra A_{\circledast}$ introduced in Definition \ref{sf-valuation}  {\em cannot be considered as homomorphisms} between the involved algebras. This is because the operations associated to the connectives of $C_{\circledast}$ are just partially defined in $A_{\circledast}$, as the following example shows.
\end{remark}

\begin{example} \label{ejemplo-no-homomorfismo} Consider the same logics of Example \ref{P1-1}, and the same fibring pair $(\lambda,\mu)$. Let $v$ be an s.f.v. such that $v(p) = 1$ and 
$v(q) = F$. Since $\rar_{P^1} \in C_1$ and $\ol{v}(p)=v(p)=1\in A_2$, then $\bar{\lambda}(\ol{v}(p)) = \mu(\ol{v}(p))=\mu(1) = T$. On the other hand, since $\ol{v}(q)=v(q)=F \in A_1$ then $\bar{\lambda}(\ol{v}(q)) = \ol{v}(q) = F$.
Then, $\ol{v}(p \rar_{P^1} q) =  \bar{\lambda}(\ol{v}(p)) \rar_{P^1} \bar{\lambda}(\ol{v}(q)) = T \rar_{P^1} F = F \in A_{\circledast}$. However, $\ol{v}(p) \rar_{P_1} \ol{v}(q)$ is not defined, since the domain of $\rar_{P^1}$ is $A_1$, and $\ol{v}(p)  = 1 \in A_2$. \hfill{$\bsq$}
\end{example}

\noindent Until now, from Remark \ref{no-es-homom} and the example above, the fibring $[\lc_1 \circledast \lc_2]_{(\lambda,\mu)}$, as given in Definition \ref{gfibringmatriz}, cannot be considered as a matrix logic. This is because, in part, we have followed the ideas and formalism of \cite{con:fer:05} and \cite{gab:96}. However, we will see that, in a somewhat hidden way, the relation $\models^{\circledast}_{(\lambda,\mu)}$ can be characterized by a matrix. This result will be developed in the sequel.

\begin{definition} \label{funciones-lambda-mu} Let $C_1$, $C_2$ be two signatures, and let $M_i = ({\bf A}_i,D_i)$ be a $C_i$-matrix ($i = 1,2$). 
For every fibring pair $(\lambda,\mu)$, we define the  {\em functions} $\ast_{\lambda}: A_{\circledast} \lra A_2$ and $\ast_{\mu}: A_{\circledast} \lra A_1$ as follows: \\

$\begin{array}{lll}
\ast_{\lambda}(x)=\left \{\begin{array}{ll}  \lambda(x)  &  \mbox{if $x \in A_1$} \\ 
\\
x &  \mbox{if $x \in A_2$}\end{array} \right. & \ \ \ \ &
\ast_{\mu}(x)=\left \{\begin{array}{ll} x  &  \mbox{if $x \in A_1$} \\ 
\\
\mu(x) & \mbox{if $x \in A_2$}\end{array} \right.
\end{array}$
\end{definition}

\

\begin{definition} \label{matriz-lambda-mu} For $i = 1,2$, consider $C_i$, $M_i = ({\bf A_i},D_i)$, $C_{\circledast}$ and $(\lambda,\mu)$,  as in Definition \ref{gfibringmatriz}. The  {\em fibred $C_{\circledast}$-matrix} $(M_1 \circledast M_2)_{(\lambda,\mu)} := ({\bf A_{\circledast}^{(\lambda,\mu)}},D_{\circledast})$ is such that ${\bf A_{\circledast}^{(\lambda,\mu)}}$ := $(A_{\circledast}, C_{\circledast}^{(\lambda,\mu)})$, where: the sets $A_{\circledast}$ and $D_{\circledast}$ are the same of Definition~\ref{sf-valuation},  and the truth-functions of $C_{\circledast}^{(\lambda,\mu)}$ are defined as follows (in the sequel, we consider $k$-ary connectives, and $k$-tuples $\vec{x} = (x_1,\dots,x_k) \in (A_{\circledast})^k$):
\begin{itemize}
  \item [] If $c \in C_1^k$, then $c^{{\bf A}_{\circledast}^{(\lambda,\mu)}}(\vec{x}):= c^{{\bf A}_1}(\ast_{\mu}(x_1),\dots,\ast_{\mu}(x_k))$
\item [] If $c \in C_2^k$, then $c^{{\bf A}_{\circledast}^{(\lambda,\mu)}}(\vec{x}):= c^{{\bf A}_2}(\ast_{\lambda}(x_1),\dots,\ast_{\lambda}(x_k))$ 
\end{itemize}

\end{definition}

\noindent To simplify the previous notation, the truth-functions $c^{{\bf A}_{\circledast}^{(\lambda,\mu)}}$ will be often denoted merely by $c$.

\begin{example} \label{ejemplo-matrices-fibradas} Again, consider the signatures $C_1$, $C_2$, and the matrices $M_{P^1}$ and $M_{CPL}$ from Example \ref{P1-1}, with the same fibring pair $(\lambda,\mu)$. As we have seen, $|C_{\circledast}| = |C_1| \uplus |C_2| = \{ \rar_{{\!\!\!\!}_{P^1}}, \wedge_{{\!}_{CPL}}, 
\neg_{{\!\!}_{P^1}}, \neg_{{\!}_{CPL}}\}$ and $A_{\circledast}$ = 
$\{T,T_1,F,0,1\}$. The functions $\ast_{\mu}$ and $\ast_{\lambda}$ are defined as:

$$\begin{array}{|c|c| c| c|c|c|} \hline & T & T_1 & F & 1 & 0\\
\hline \ast_{\mu} & T & T_1 & F & T & F  \\ \hline \end{array} \hspace{2cm}
\begin{array}{|c|c| c| c|c|c|} \hline & T & T_1 & F & 1 & 0\\
\hline \ast_{\lambda} & 1 & 1 & 0 & 1 & 0  \\ \hline 
\end{array} \hm$$

\noindent These functions allow us to define the $C_{\circledast}$-matrix $(M_{P^1} \circledast M_{CPL})_{(\lambda,\mu)} = ({\bf A_{\circledast}^{(\lambda,\mu)}}, D_{\circledast})$, where $D_{\circledast} = \{T,T_1,1\}$. The truth-functions of  ${\bf A_{\circledast}^{(\lambda,\mu)}}$ are:

$$\begin{array}{|c|c| c| c|c|c|} \hline 
\rar_{{\!\!\!\!}_{P^1}} & T & T_1 & F & 1 & 0\\
\hline
T & T & T & F & T & F\\
T_1 & T & T & F & T & F\\
F & T & T & T & T & T\\
1 & T & T & F & T & F\\
0 & T & T & F & T & F\\
\hline \end{array}
\hspace{2cm}
\begin{array}{|c|c| c| c|c|c|} \hline 
\wedge_{{\!}_{CPL}} & T & T_1 & F & 1 & 0\\
\hline
T & 1 & 1 & 0 & 1 & 0\\
T_1 & 1 & 1 & 0 & 1 & 0\\
F & 0 & 0 & 0 & 0 & 0\\
1 & 1 & 1 & 0 & 1 & 0\\
0 & 0 & 0 & 0 & 0 & 0\\
\hline \end{array}
$$

\noindent (For $\neg_{{\!\!}_{P^1}}$ and $\neg_{{\!}_{CPL}}$ proceed in a similar way). \hfill $\bsq$ 
\end{example}

Note that every pair $(\lambda,\mu)$ determines a different $C_{\circledast}$-matrix $M_{(\lambda,\mu)}$, even when all these $C_{\circledast}$-matrices have the same support/set of designated values (that is, $A_{\circledast}$/$D_{\circledast}$). However, the truth-functions in $C_{\circledast}^{(\lambda,\mu)}$ depend of every pair $(\lambda,\mu)$. 

Besides that, consider a fixed matrix $M_{(\lambda,\mu)}$ and note that, in the definition of a truth-table associated to  a certain connective $c$, the output values of its associated truth-function always belong to the matrix whose signature contains $c$. In addition, note that the functions $\ast_{\mu}$ and $\ast_{\lambda}$ are $1$-ary truth-functions. However, usually there are no connectives in $C_{\circledast}$ associated to them. So we just can work with them in the context of the algebra ${\bf A_{\circledast}^{(\lambda,\mu)}}$, but we cannot consider them as belonging to the formal fibred language.

Definition \ref{matriz-lambda-mu} and Example \ref{ejemplo-matrices-fibradas} are strongly suggesting that the fibring of logics can be described, in fact, by the fibred matrices. The formal proof of this fact is given in the sequel. Actually, the relations $\models^{\circledast}_{(\lambda,\mu)}$ and $\models_{M_{(\lambda,\mu)}}$ are both defined interpreting the formulas in the same domain $A_{\circledast}$, and considering a special subset $D_{\circledast} \su A_{\circledast}$. The only difference in both consequence relations is the nature of the functions involved (s.f.v and $M_{(\lambda,\mu)}$-valuations, respectively, according Definitions \ref{c-matrices} and \ref{matriz-lambda-mu} above). Anyway, these functions can be interdefined, as the next technical result shows.

\begin{proposition}\label{fibring-matriz-tecnica} 
Let $\lc_i = \lan C_i, \models_{M_i}\ran$ be matrix logics, with $M_i = ({\bf A}_i, D_i)$; ${\bf A}_i = (A_i,C_i^{{\bf A}_i})$ $(i=1,2)$, and let $(\lambda,\mu)$ be a fibring pair. Then:

\noindent $(a)$ For every s.f.v. $v: \V \lra A_{\circledast}$ there is a $M_{(\lambda,\mu)}$-valuation $w_v: \V \lra A_{\circledast}$ such that, for every 
$\varphi \in L(C_{\circledast})$, $v(\varphi) = w_v(\varphi)$.

\noindent $(b)$ For every $M_{(\lambda,\mu)}$-valuation $w$ exists a simple fibred valuation $v_w$ such that, 
for every $\varphi \in L(C_{\circledast})$, $v(\varphi) = w_v(\varphi)$.
\end{proposition}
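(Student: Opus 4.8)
The plan is to exploit the fact that, at the level of propositional variables, an s.f.v.\ and an $M_{(\lambda,\mu)}$-valuation are the same kind of object, namely an arbitrary function $\V \lra A_{\circledast}$; what differs is only the recipe by which such a function is extended to $L(C_{\circledast})$ --- the recursive clauses of Definition~\ref{sf-valuation} versus the requirement of being a homomorphism into ${\bf A}_{\circledast}^{(\lambda,\mu)}$. Accordingly, for $(a)$ I would let $w_v$ be the $M_{(\lambda,\mu)}$-valuation determined by $w_v(p):=v(p)$ for all $p \in \V$ (it exists and is unique because $L(C_{\circledast})$ is absolutely free), and for $(b)$ I would let $v_w:=w|_{\V}$, which is automatically an s.f.v. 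The whole content of the proposition then reduces to the single statement that, with these choices, $\ol{v}(\varphi)=w_v(\varphi)$ for every $\varphi \in L(C_{\circledast})$, to be proved by structural induction on $\varphi$.

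Before running the induction I would record the key observation that the auxiliary maps $\bar{\lambda}^1_2$ and $\bar{\lambda}^2_1$ of Definition~\ref{sf-valuation} are literally the functions $\ast_{\mu}$ and $\ast_{\lambda}$ of Definition~\ref{funciones-lambda-mu}: writing the fibring pair $(\lambda^1_2,\lambda^2_1)$ as $(\lambda,\mu)$, both $\bar{\lambda}^1_2$ and $\ast_{\mu}$ send $a\in A_1$ to $a$ and $a\in A_2$ to $\mu(a)$, and symmetrically $\bar{\lambda}^2_1=\ast_{\lambda}$. Granting this, the base case $\varphi\in\V$ is immediate from the choice of $w_v$. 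For the step, let $\varphi=c(\psi_1,\dots,\psi_k)$ with $c\in C_i^k$; by symmetry assume $i=1$. Then Definition~\ref{sf-valuation} gives
$$\ol{v}(\varphi)=c^{{\bf A}_1}\big(\ast_{\mu}(\ol{v}(\psi_1)),\dots,\ast_{\mu}(\ol{v}(\psi_k))\big),$$
which by the induction hypothesis $\ol{v}(\psi_l)=w_v(\psi_l)$ equals $c^{{\bf A}_1}\big(\ast_{\mu}(w_v(\psi_1)),\dots,\ast_{\mu}(w_v(\psi_k))\big)$; by the clause for $c\in C_1^k$ in Definition~\ref{matriz-lambda-mu} this is exactly $c^{{\bf A}_{\circledast}^{(\lambda,\mu)}}(w_v(\psi_1),\dots,w_v(\psi_k))$, and since $w_v$ is a homomorphism this last value is $w_v(\varphi)$. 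The case $i=2$ is identical with $\ast_{\lambda}$, $C_2$, ${\bf A}_2$ replacing $\ast_{\mu}$, $C_1$, ${\bf A}_1$. This proves $(a)$. Part $(b)$ is the same statement: given $w$, put $v_w:=w|_{\V}$; then $w$ is itself the unique $M_{(\lambda,\mu)}$-valuation extending $v_w$, i.e.\ $w=w_{v_w}$, so the equality just established, applied with $v:=v_w$, yields $\ol{v_w}(\varphi)=w(\varphi)$ for every $\varphi$. Hence no separate argument is needed.

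I do not anticipate a genuine obstacle: the proposition is in essence a bookkeeping lemma asserting that Definitions~\ref{sf-valuation} and~\ref{matriz-lambda-mu} encode one and the same recursion. The only delicate point is the identification $\bar{\lambda}^i_j=\ast_{\mu}$ (resp.\ $\ast_{\lambda}$), which forces one to reconcile the index conventions of Definition~\ref{sf-valuation} with the $(\lambda,\mu)$-notation of Definitions~\ref{funciones-lambda-mu} and~\ref{matriz-lambda-mu}; once this is made explicit, the induction is mechanical. I would also note, as a useful byproduct, that $v\mapsto w_v$ is a bijection between the s.f.v.'s and the $M_{(\lambda,\mu)}$-valuations, both being freely determined by their restriction to $\V$, which is precisely what lets one transfer $\models^{\circledast}_{(\lambda,\mu)}$ to $\models_{M_{(\lambda,\mu)}}$ in the sequel.
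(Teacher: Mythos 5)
Your proof is correct and follows essentially the same route as the paper's: define $w_v$ (resp.\ $v_w$) by restriction to $\V$, observe that $\bar{\lambda}^1_2=\ast_{\mu}$ and $\bar{\lambda}^2_1=\ast_{\lambda}$, and run a structural induction showing the two extension recursions coincide. Your explicit reconciliation of the $\bar{\lambda}^i_j$ notation with $\ast_{\mu},\ast_{\lambda}$ is in fact cleaner than the paper's somewhat garbled step at that point, but it is the same argument.
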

\begin{proof}
For $(a)$: let $v$ be a s.f.v. We define $w_v: \V \lra A_{\circledast}$ as $w_v(p) = v(p)$ for every $p \in\V$. This function is extended homomorphically to every formula of $L(C_{(\circledast})$ because $\V$ generates $L(C_{\circledast})$. Let us prove by induction on the complexity of $\varphi$ that $\ol{v}(\varphi)$ = 
$w_v(\varphi)$ for every $\varphi \in L(C_{\circledast})$. This is valid if $\varphi$ is a variable $p \in \V$, by definition. Now, suppose that $\varphi$ = 
$c(\psi_1,\dots,\psi_k)$, with $c$ a $k$-ary connective. Then:

\noindent $(a.1)$ $c \in C_1^k$. So, $w_v(\varphi) = c(w_v(\psi_1),\dots,w_v(\psi_k)) = c(\ol{v}(\psi_1),\dots,\ol{v}(\psi_k))$, by I. H. Now, 
taking into account Definition \ref{matriz-lambda-mu}, we have $c(\ol{v}(\psi_1),\dots,\ol{v}(\psi_k)) = c(\ast_{\mu}(\ol{v}(\psi_1)),\dots,\ast_\mu(\ol{v}(\psi_k)))$. Note now that, for $i$ =$1,\dots,k$, $\ast_{\mu}(\ol{v}(\psi_i)) = ({\psi_i})_{(v,c)}$, cf. Definition \ref{sf-valuation}. From 
this, $w_v(\varphi) = c(\ol{v}(\psi_1)_{(v,c)},\dots,
\ol{v}(\psi_k)_{(v,c)}) = \ol{v}(\varphi)$.

\noindent $(a.2)$ $c \in C_2^k$. Similar to the previous case, using the map $\ast_{\lambda}$. This completes the proof for $(a)$.

\noindent For $(b)$: if $w$ is a $M_{(\lambda,\mu)}$-valuation, define $v_w: \V \lra A_{(\lambda,\mu)}$ as $v_w(p) = v(p)$ for every $p \in\V$, and extend this function (using Definition \ref{gfibringmatriz}) to $\ol{v_w}: L(C_{\circledast}) \lra A_{\circledast}$. Then, proceed as in $(a)$.
\end{proof}

\noindent Note that the previous result has an equivalent formulation (whose proof is somewhat different to the one developed above), which is the following: for every map 
$v:L(C_{\circledast}) \to A_{\circledast}$ it holds that $v$ is a simple fibred valuation if and only if $v$ is an $M_{(\lambda,\mu)}$-valuation.

From Proposition \ref{fibring-matriz-tecnica} is is obvious the following:

\begin{theorem} \label{fibring-es-matricial} Let $\lc_i = \lan C_i, \models_{M_i}\ran$ be matrix logics, for $i=1,2$. Then, $[\lc_1 \circledast \lc_2]_{(\lambda,\mu)}$ coincides with the matrix logic $\lan C_{\circledast}, \models_{(M_1 \circledast M_2)_{(\lambda,\mu)}}\ran$, for every fibring pair $(\lambda,\mu)$.
\end{theorem}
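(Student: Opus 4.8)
The plan is simply to unwind the definitions of the two consequence relations and observe that Proposition~\ref{fibring-matriz-tecnica} makes them coincide. Both $\models^{\circledast}_{(\lambda,\mu)}$ and $\models_{(M_1 \circledast M_2)_{(\lambda,\mu)}}$ are subsets of $\wp(L(C_{\circledast})) \times L(C_{\circledast})$, and in each case "$\Gamma \models \varphi$" asserts: for every map $f$ in a certain class of functions $L(C_{\circledast}) \lra A_{\circledast}$, if $f(\Gamma) \su D_{\circledast}$ then $f(\varphi) \in D_{\circledast}$. For $\models^{\circledast}_{(\lambda,\mu)}$ the relevant class is $\{\ol{v} : v \text{ an s.f.v.}\}$ (recall that an s.f.v. $\ol{v}$ \emph{satisfies} $\varphi$ exactly when $\ol{v}(\varphi) \in D_{\circledast}$); for $\models_{(M_1 \circledast M_2)_{(\lambda,\mu)}}$ it is the class of $(M_1 \circledast M_2)_{(\lambda,\mu)}$-valuations. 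The designated set $D_{\circledast}$ is literally the same object in both, by construction in Definition~\ref{matriz-lambda-mu}. So everything reduces to showing the two classes of functions agree, and that is precisely the content of Proposition~\ref{fibring-matriz-tecnica}.

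First I would record the inclusion $\models_{(M_1 \circledast M_2)_{(\lambda,\mu)}} \,\su\, \models^{\circledast}_{(\lambda,\mu)}$. Assume $\Gamma \models_{(M_1 \circledast M_2)_{(\lambda,\mu)}} \varphi$ and let $v$ be any s.f.v. whose extension $\ol{v}$ satisfies every formula of $\Gamma$. By Proposition~\ref{fibring-matriz-tecnica}$(a)$ there is an $(M_1 \circledast M_2)_{(\lambda,\mu)}$-valuation $w_v$ with $\ol{v}(\psi) = w_v(\psi)$ for all $\psi \in L(C_{\circledast})$; hence $w_v(\Gamma) \su D_{\circledast}$, so $w_v(\varphi) \in D_{\circledast}$, so $\ol{v}(\varphi) \in D_{\circledast}$, i.e. $\ol{v}$ satisfies $\varphi$. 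Since $v$ was arbitrary, $\Gamma \models^{\circledast}_{(\lambda,\mu)} \varphi$.

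The reverse inclusion is symmetric, using Proposition~\ref{fibring-matriz-tecnica}$(b)$: from an $(M_1 \circledast M_2)_{(\lambda,\mu)}$-valuation $w$ one obtains an s.f.v. $v_w$ with $\ol{v_w}(\psi) = w(\psi)$ for all $\psi$, and then transports the hypothesis $w(\Gamma) \su D_{\circledast}$ and the conclusion $w(\varphi) \in D_{\circledast}$ across this equality. Combining both inclusions gives $\models^{\circledast}_{(\lambda,\mu)} \,=\, \models_{(M_1 \circledast M_2)_{(\lambda,\mu)}}$, which is exactly the asserted coincidence of logics since they share the signature $C_{\circledast}$.

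I do not expect a real obstacle here, as the theorem's substance is entirely carried by the preceding proposition; the author even flags the slickest route, namely its equivalent reformulation that a map $L(C_{\circledast}) \to A_{\circledast}$ is a simple fibred valuation \emph{iff} it is an $(M_1 \circledast M_2)_{(\lambda,\mu)}$-valuation, which collapses the two quantifications into one and renders the equality of consequence relations immediate. The only point deserving a moment's care is the bookkeeping that the designated sets agree and that "$\ol{v}$ satisfies $\varphi$" on the fibring side coincides with the matrix-side condition "$v(\varphi) \in D_{\circledast}$"; both are settled directly by Definitions~\ref{sf-valuation} and~\ref{matriz-lambda-mu}.
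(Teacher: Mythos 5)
Your argument is exactly the paper's: the theorem is stated there as an immediate consequence of Proposition~\ref{fibring-matriz-tecnica}, and your two inclusions merely spell out the routine transfer of the satisfaction condition across the correspondence $v \leftrightarrow w_v$ given by parts $(a)$ and $(b)$. The proposal is correct and adds only the explicit bookkeeping the paper leaves to the reader.
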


\begin{corollary}\label{fibring-estru-finitario} For every pair of matrix logics $\lc_i = \lan C_i, \models_{M_i}\ran$ 
($i=1,2$):\\[1mm]
$(a)$ $[\lc_1 \circledast \lc_2]_{(\lambda,\mu)}$ is a structural logic.\\
$(b)$  If $M_1$ and $M_2$ are finite then
$\models_{(M_1 \circledast M_2)_{(\lambda,\mu)}}$ is finitary and so
$[\lc_1 \circledast \lc_2]_{(\lambda,\mu)}$ is a standard logic.
\end{corollary}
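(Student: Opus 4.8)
The plan is to deduce both items directly from Theorem~\ref{fibring-es-matricial} together with the general facts about matrix logics recalled in Remark~\ref{matriz-es-estructural}, so that essentially no new work is required beyond a small observation about cardinalities.

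For $(a)$ I would argue as follows. By Theorem~\ref{fibring-es-matricial}, the logic $[\lc_1 \circledast \lc_2]_{(\lambda,\mu)}$ coincides with the matrix logic $\lan C_{\circledast}, \models_{(M_1 \circledast M_2)_{(\lambda,\mu)}}\ran$, that is, with a logic of the form $\lan C,\models_{\K}\ran$ for the class $\K = \{(M_1 \circledast M_2)_{(\lambda,\mu)}\}$. By Remark~\ref{matriz-es-estructural}, every logic of this form is Tarskian and structural; in particular $[\lc_1 \circledast \lc_2]_{(\lambda,\mu)}$ is structural. If one prefers a self-contained argument, structurality can also be checked directly: given a substitution $\sigma: L(C_{\circledast}) \lra L(C_{\circledast})$ and an $M_{(\lambda,\mu)}$-valuation $w$, the composition $w \circ \sigma$ is again an $M_{(\lambda,\mu)}$-valuation, since both $w$ and $\sigma$ are homomorphisms of $C_{\circledast}$-algebras (using Proposition~\ref{fibring-matriz-tecnica}); from this one reads off that $\Gamma \models_{M_{(\lambda,\mu)}} \varphi$ implies $\sigma(\Gamma) \models_{M_{(\lambda,\mu)}} \sigma(\varphi)$.

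For $(b)$ the only point to verify is that, when $M_1$ and $M_2$ are finite, the fibred matrix $(M_1 \circledast M_2)_{(\lambda,\mu)}$ is finite as well. But its universe is $A_{\circledast} = A_1 \uplus A_2$, the disjoint union of the two finite sets $A_1$ and $A_2$, hence finite; and its designated set $D_{\circledast} \su A_{\circledast}$ is thus finite too. Therefore $\K = \{(M_1 \circledast M_2)_{(\lambda,\mu)}\}$ is a finite class of finite matrices, so by the result of W\'ojcicki recalled in Remark~\ref{matriz-es-estructural} the logic $\lan C_{\circledast},\models_{\K}\ran$ is finitary; combining this with $(a)$ and Theorem~\ref{fibring-es-matricial}, $[\lc_1 \circledast \lc_2]_{(\lambda,\mu)}$ is Tarskian, structural and finitary, i.e.\ standard.

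I do not expect any genuine obstacle here: the substantive content has already been absorbed into Theorem~\ref{fibring-es-matricial}, and what remains is the trivial remark that a disjoint union of two finite sets is finite, together with a citation of the standard W\'ojcicki theorems. The only thing one must be slightly careful about is to phrase the appeal to the finite-matrix theorem in terms of the singleton class $\{(M_1 \circledast M_2)_{(\lambda,\mu)}\}$, which is indeed a finite class of finite matrices.
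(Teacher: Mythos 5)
Your proof is correct and matches the paper's (implicit) argument exactly: the corollary is stated without proof precisely because it follows immediately from Theorem~\ref{fibring-es-matricial} together with the facts recalled in Remark~\ref{matriz-es-estructural}, plus the trivial observation that $A_{\circledast}=A_1\uplus A_2$ is finite when $A_1$ and $A_2$ are. Nothing further is needed.
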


\noindent From now on, the fibring by functions of two logics will be exclusively represented by the matrix construction given in  Theorem \ref{fibring-es-matricial}, namely  $[\lc_1 \circledast \lc_2]_{(\lambda,\mu)} := \lan C_{\circledast}, \models_{(M_1 \circledast M_2)_{(\lambda,\mu)}}\ran$.

\section{Some basic properties of Fibring of logics}

Now we will analyze the relationship between the 
logics $\lc_i = \lan C_i, \models_{M_i}\ran$ ($i=1,2$), and the fibring $[\lc_1 \circledast \lc_2]_{(\lambda,\mu)}$. To lighten notation, we will write $M_{(\lambda,\mu)}$ instead of $(M_1 \circledast M_2)_{(\lambda,\mu)}$. First of all, we will give some general definitions:

\begin{definition}\label{extension-debil} Let  $\lc_1 = \lan C_1,\vdash_1\ran$ and $\lc_2 = \lan C_2, \vdash_2\ran$  be structural  logics.\\
(1) $\lc_2$ is a  {\em weak extension} of $\lc_1$ if
$C_1 \sqsubseteq C_2$ and, for every $ \varphi \in L(C_1)$, $ \vdash_1 \varphi$ implies that $\vdash_2 \varphi$.\\
(2) $\lc_2$ is  a  {\em conservative (weak) extension} of $\lc_1$ if, for every $\varphi \in L(C_1)$, $\vdash_1 \varphi$  {\em if and only if} $\vdash_2 \varphi$.
\end{definition}

\noindent The previous notion is usually generalized to a stronger one:

\begin{definition} \label{extension-fuerte}
 Let  $\lc_1 = \lan C_1,\vdash_1\ran$ and $\lc_2 = \lan C_2, \vdash_2\ran$  be structural  logics.\\
(1) $\lc_2$ is a  {\em strong extension} of $\lc_1$ if
$C_1 \sqsubseteq C_2$ and, for every $\Gamma \cup \{\varphi\}$, $\Gamma \vdash_1 \varphi$ implies that $\Gamma \vdash_2 \varphi$.\\
(2) $\lc_2$ is  a  {\em conservative strong extension} of $\lc_1$ if, for every $\Gamma \cup \{\varphi\}$, $\Gamma \vdash_1 \varphi$  {\em if and only if} $\Gamma \vdash_2 \varphi$.
\end{definition} 

\begin{lemma} \label{lema1} Let $w$ be an $M_{(\lambda,\mu)}$-valuation, and consider the $M_1$-valuation $w_1:\V \lra A_1$ such that $w_1(p)= \ast_{\mu}(w(p))$ for every $p \in \V$. Then, for every $\varphi\in L(C_1)\setminus\V$, $w(\varphi) = w_1(\varphi)$. An analogous result
holds for $M_2$, by defining the $M_2$-valuation $w_2$ as follows:  $w_2(p)= \ast_{\lambda}(w(p))$, for every $p \in \V$.
\end{lemma}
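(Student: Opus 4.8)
The plan is to argue by structural induction, but with one twist dictated by the shape of the statement: the equality $w(\varphi)=w_1(\varphi)$ genuinely fails on variables (if $w(p)\in A_2$ then $w_1(p)=\ast_{\mu}(w(p))\in A_1$ need not equal $w(p)$), so I would not try to prove it directly by induction. Instead I would first establish the auxiliary claim $(\star)$: for \emph{every} $\psi\in L(C_1)$ (variables included) one has $\ast_{\mu}(w(\psi))=w_1(\psi)$. This is the statement that actually admits a clean induction, because it already has the translation map $\ast_{\mu}$ built in, and the Lemma will drop out of it immediately.

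I would prove $(\star)$ by induction on the complexity of $\psi\in L(C_1)$. The base case $\psi=p\in\V$ is just the definition $w_1(p)=\ast_{\mu}(w(p))$. For the inductive step, write $\psi=c(\psi_1,\dots,\psi_k)$; since $\psi\in L(C_1)$, necessarily $c\in C_1^k$ and $\psi_1,\dots,\psi_k\in L(C_1)$. Because $w$ is an $M_{(\lambda,\mu)}$-valuation, it is a homomorphism into ${\bf A}_{\circledast}^{(\lambda,\mu)}$, so by the first clause of Definition~\ref{matriz-lambda-mu},
$$w(\psi)=c^{{\bf A}_{\circledast}^{(\lambda,\mu)}}(w(\psi_1),\dots,w(\psi_k))=c^{{\bf A}_1}(\ast_{\mu}(w(\psi_1)),\dots,\ast_{\mu}(w(\psi_k))).$$
Applying the induction hypothesis coordinatewise, $\ast_{\mu}(w(\psi_i))=w_1(\psi_i)$, and since $w_1$ is an $M_1$-valuation this gives $w(\psi)=c^{{\bf A}_1}(w_1(\psi_1),\dots,w_1(\psi_k))=w_1(\psi)$. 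Now the right-hand side lies in $A_1$ (it is the output of a $C_1$-truth-function), and by Definition~\ref{funciones-lambda-mu} the map $\ast_{\mu}$ is the identity on $A_1$; hence $\ast_{\mu}(w(\psi))=w(\psi)=w_1(\psi)$, which is $(\star)$.

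The Lemma then follows at once: if $\varphi\in L(C_1)\setminus\V$, then $\varphi=c(\psi_1,\dots,\psi_k)$ with $c\in C_1^k$, so $w(\varphi)\in A_1$ and therefore $w(\varphi)=\ast_{\mu}(w(\varphi))=w_1(\varphi)$ by $(\star)$. The assertion for $M_2$ is obtained by the same argument, reading $\ast_{\lambda}$ for $\ast_{\mu}$, $A_2$ for $A_1$, $C_2$ for $C_1$, and using the second clause of Definition~\ref{matriz-lambda-mu}.

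I do not expect a real obstacle here — this is a bookkeeping induction — but the one point that must be handled carefully is that $w$, being an $M_{(\lambda,\mu)}$-valuation, is \emph{a priori} a homomorphism only with respect to the fibred truth-functions $c^{{\bf A}_{\circledast}^{(\lambda,\mu)}}$, not with respect to the original $c^{{\bf A}_1}$, so one cannot simply invoke ``two homomorphisms agreeing on generators''. The entire content of the proof is the observation, read off Definition~\ref{matriz-lambda-mu}, that $c^{{\bf A}_{\circledast}^{(\lambda,\mu)}}$ is exactly $c^{{\bf A}_1}$ precomposed coordinatewise with $\ast_{\mu}$, together with $\ast_{\mu}\!\restriction_{A_1}=\mathrm{id}$ — and this is also precisely why the statement must exclude the propositional variables.
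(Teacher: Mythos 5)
Your proof is correct and follows essentially the same route as the paper, which merely says ``induction on the number of connectives, using Definitions~\ref{funciones-lambda-mu} and~\ref{matriz-lambda-mu}.'' Your auxiliary claim $(\star)$ is exactly the strengthening of the induction hypothesis that the paper's one-line proof leaves implicit (one can equivalently run the induction with a case split on whether each immediate subformula is a variable), and your closing remark about why $w$ is not a homomorphism into ${\bf A}_1$ correctly identifies the only point of substance.
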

\begin{proof}
Use induction on the number of connectives appearing in $\varphi$, and apply
Definitions~\ref{funciones-lambda-mu} and \ref{matriz-lambda-mu}.
\end{proof}

\begin{lemma} \label{lema2} For every $M_1$-valuation $v_1:L(C_1)\lra A_1$, the $M_{(\lambda,\mu)}$-valuation $w_{v_1}:\V\lra A_{\circledast}$ defined by 
$w_{v_1}(p)=v_1(p)$ for every $p \in \V$ verifies that $w_{v_1}(\varphi) = v_1(\varphi)$ for every $\varphi\in
L(C_1)$. A similar result holds for $M_2$-valuations.
\end{lemma}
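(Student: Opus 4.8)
The statement is essentially the converse direction of Lemma \ref{lema1}, but now starting from an $M_1$-valuation and lifting it to an $M_{(\lambda,\mu)}$-valuation; it asserts that on formulas belonging to the fragment $L(C_1)$ the lifted valuation agrees with the original one (including on variables, so the inclusion of $\V$ costs nothing here). The plan is to proceed by induction on the number of connectives occurring in $\varphi \in L(C_1)$, exactly parallel to the proofs of Lemma \ref{lema1} and Proposition \ref{fibring-matriz-tecnica}$(a)$.

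First I would fix an $M_1$-valuation $v_1 : L(C_1) \lra A_1$ and define $w_{v_1} : \V \lra A_{\circledast}$ by $w_{v_1}(p) = v_1(p)$ for every $p \in \V$; since $L(C_{\circledast})$ is absolutely free over $\V$, this extends uniquely to a homomorphism $w_{v_1} : L(C_{\circledast}) \lra A_{\circledast}$, i.e.\ to an $M_{(\lambda,\mu)}$-valuation. For the base case, if $\varphi = p \in \V$ then $w_{v_1}(p) = v_1(p)$ by definition. For the inductive step, let $\varphi = c(\psi_1,\dots,\psi_k)$ with $\varphi \in L(C_1)$; then necessarily $c \in C_1^k$ and each $\psi_j \in L(C_1)$, so the induction hypothesis gives $w_{v_1}(\psi_j) = v_1(\psi_j) \in A_1$ for every $j$. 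The key observation is that $\ast_{\mu}$ is the identity on $A_1$ (Definition \ref{funciones-lambda-mu}); hence
\[
w_{v_1}(\varphi) = c^{{\bf A}_{\circledast}^{(\lambda,\mu)}}(w_{v_1}(\psi_1),\dots,w_{v_1}(\psi_k)) = c^{{\bf A}_1}(\ast_{\mu}(v_1(\psi_1)),\dots,\ast_{\mu}(v_1(\psi_k))) = c^{{\bf A}_1}(v_1(\psi_1),\dots,v_1(\psi_k)) = v_1(\varphi),
\]
using Definition \ref{matriz-lambda-mu} for the second equality, $\ast_{\mu}\!\restriction_{A_1} = \mathrm{id}$ for the third, and the fact that $v_1$ is an $M_1$-homomorphism for the last. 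The case of $M_2$-valuations is entirely symmetric, replacing $C_1, A_1, \ast_{\mu}, M_1$ by $C_2, A_2, \ast_{\lambda}, M_2$ and using that $\ast_{\lambda}$ restricts to the identity on $A_2$.

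There is no real obstacle here: the only point requiring a moment's care is the observation that a formula in the fragment $L(C_1)$ is built solely from connectives of $C_1$ applied to subformulas again in $L(C_1)$ (this is where the ``fragment'' structure from Definition \ref{definicion-basica}$(b)$ is used), so the auxiliary translation maps $\ast_{\lambda}$ never get invoked and $\ast_{\mu}$ is only ever applied to elements of $A_1$, where it is trivial. Thus the fibred truth-functions collapse to the original $C_1$-operations along the fragment, and the induction goes through immediately.
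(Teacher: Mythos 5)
Your proof is correct and follows exactly the route the paper intends: the paper's own proof is just the one-line remark that the claim follows by induction on the complexity of $\varphi$ using Definitions~\ref{funciones-lambda-mu} and~\ref{matriz-lambda-mu}, and your argument fills in precisely those details, with the key point being that $\ast_{\mu}$ restricts to the identity on $A_1$ so the fibred truth-functions collapse to the original $C_1$-operations on the fragment $L(C_1)$.
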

\begin{proof}
Again, it follows straightforwardly by induction on the
complexity of $\varphi$ and Definitions~\ref{funciones-lambda-mu} and \ref{matriz-lambda-mu}.
\end{proof}

\begin{proposition} \label{weak} Let $\lc_i = \lan C_i, \models_{M_i}\ran$ be non-trivial matrix logics\footnote{A logic $\lc = (C,\vdash)$ is  {\em trivial} if $\Gamma\vdash\varphi$ for
every $\Gamma\cup\{\varphi\} \su L(C)$.} 
($i=1,2$). Then $[\lc_1 \circledast \lc_2]_{(\lambda,\mu)}$ is a weak conservative extension
of $\lc_1$ and $\lc_2$. 
\end{proposition}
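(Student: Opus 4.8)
The plan is to unwind Definition~\ref{extension-debil} and reduce the statement to the two preceding lemmas. That $C_1 \sqsubseteq C_{\circledast}$ and $C_2 \sqsubseteq C_{\circledast}$ is immediate from $C_{\circledast} = C_1 \uplus C_2$, so the real content is the biconditional: for every $\varphi \in L(C_1)$, $\models_{M_1}\varphi$ iff $\models_{M_{(\lambda,\mu)}}\varphi$ (and symmetrically, with $C_2$, $M_2$ replacing $C_1$, $M_1$). Before the two directions I would record two elementary observations used throughout. First, from the definition of the fibred matrix one has $A_{\circledast} = A_1 \uplus A_2$ and the designated set satisfies $D_{\circledast}\cap A_i = D_i$; moreover, by Definition~\ref{matriz-lambda-mu}, whenever the outermost connective of a formula lies in $C_i^k$ its value under any $M_{(\lambda,\mu)}$-valuation belongs to $A_i$. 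Second, non-triviality of $\lc_i$ forces $D_i \subsetneq A_i$, since $D_i = A_i$ would make $\Gamma\models_{M_i}\psi$ hold for all $\Gamma\cup\{\psi\}$.

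For the direction $\models_{M_{(\lambda,\mu)}}\varphi \Rightarrow \models_{M_1}\varphi$, I would fix $\varphi \in L(C_1)$ and an arbitrary $M_1$-valuation $v_1:L(C_1)\lra A_1$, and lift it, via Lemma~\ref{lema2}, to the $M_{(\lambda,\mu)}$-valuation $w_{v_1}$ determined by $w_{v_1}(p)=v_1(p)$ on variables; the lemma gives $w_{v_1}(\varphi)=v_1(\varphi)$. Since $\varphi\in L(C_1)$, the value $v_1(\varphi)$ lies in $A_1$ (it is either a variable value or the value of a $C_1$-connective). The hypothesis yields $w_{v_1}(\varphi)\in D_{\circledast}$, hence $v_1(\varphi)\in D_{\circledast}\cap A_1 = D_1$; as $v_1$ was arbitrary, $\models_{M_1}\varphi$.

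For the converse, $\models_{M_1}\varphi \Rightarrow \models_{M_{(\lambda,\mu)}}\varphi$, I would first note that $\varphi$ cannot be a propositional variable: $\models_{M_1}p$ would say $v_1(p)\in D_1$ for every $M_1$-valuation, i.e. $D_1 = A_1$, contradicting non-triviality. So $\varphi\in L(C_1)\setminus\V$ and Lemma~\ref{lema1} applies: for an arbitrary $M_{(\lambda,\mu)}$-valuation $w$, the $M_1$-valuation $w_1$ defined by $w_1(p)=\ast_{\mu}(w(p))$ satisfies $w(\varphi)=w_1(\varphi)$. The hypothesis gives $w_1(\varphi)\in D_1\subseteq D_{\circledast}$, so $w$ satisfies $\varphi$; as $w$ was arbitrary, $\models_{M_{(\lambda,\mu)}}\varphi$. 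The proof that $[\lc_1 \circledast \lc_2]_{(\lambda,\mu)}$ conservatively extends $\lc_2$ is entirely symmetric, using the ``$M_2$'' halves of Lemmas~\ref{lema1} and~\ref{lema2} and the map $\ast_{\lambda}$ in place of $\ast_{\mu}$.

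The one delicate point — and the place I would be most careful — is the interaction between the restriction ``$\varphi\in L(C_1)\setminus\V$'' in Lemma~\ref{lema1} and the variable case: this is precisely where non-triviality of $\lc_i$ enters, and it is why the statement asserts only a \emph{weak} conservative extension. Everything else is a routine transfer of the designated-value condition through the two lemmas, together with the bookkeeping fact that a $C_1$-formula is always interpreted inside $A_1$ by the fibred matrix (and likewise for $C_2$).
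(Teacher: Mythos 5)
Your proof is correct and follows essentially the same route as the paper's: non-triviality rules out the variable case so that Lemma~\ref{lema1} can be applied for the extension direction, and Lemma~\ref{lema2} handles conservativity. The only (harmless) difference is that you make explicit the bookkeeping facts $D_{\circledast}\cap A_i=D_i$ and that $C_i$-formulas are evaluated inside $A_i$, which the paper leaves implicit.
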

\begin{proof}
Since $\lc_i$ is a structural and non-trivial logic,
it follows that $\not\models_{M_i} p$ ($i=1,2$), for every $p \in \V$. 
From this, if
$\models_{M_1} \varphi$, then $\varphi \in L(C_1) \setminus \V$. Now, let $w$ be an 
$M_{(\lambda,\mu)}$-valuation. By Lemma~\ref{lema1} there exists an
$M_1$-valuation $w_1$ such that $w(\varphi)=w_1(\varphi)$. 
But $\models_{M_1} \varphi$ and so
$w_1(\varphi)\in D_1\su D_{\circledast}$. Thus, $w(\varphi)\in D_{\circledast}$ and then
$\models_{M_{(\lambda,\mu)}} \varphi$. Conversely, suppose $\varphi \in L(C_1)$, with
$\models_{M_{(\lambda,\mu)}} \varphi$, and let $v_1$ be an
$M_1$-valuation. The mapping $w_{v_1}:\V\lra A$ defined as in
Lemma~\ref{lema2} verifies $w_{v_1}(\varphi) = v_1(\varphi)$, and therefore $v_1(\varphi)\in D_{\circledast}$.
Moreover, $v_1(\varphi) \in D_1$, because $\varphi \in L(C_1)$. Hence, $\models_{M_1} \varphi$. From all this, $[\lc_1 \circledast \lc_2]_{(\lambda,\mu)}$ is a conservative weak extension of $\lc_1$. The proof for $\lc_2$ is
similar.
\end{proof}

\noindent Note that, if exactly one of the logics (say, $\lc_1$) is trivial,
then the last result is no longer true for any pair $(\lambda,\mu)$. In fact: assuming that
$\lc_1$ is trivial and $\lc_2$ is not trivial then, for every  $p \in \V$ it holds that $\models_{\lc_1} p$ and  $\not\models_{\lc_2}p$. Let $v_2$ be an $M_2$-valuation such that $v(p) \in A_2\setminus D_2$. Then, the $M_{(\lambda,\mu)}$-valuation $w_{v_2}$ defined as in Lemma~\ref{lema2} is such that $w_{v_2}(p) \notin D_{\circledast}$. From this, $\not\models_{M_{(\lambda,\mu)}} p$ and so $[\lc_1 \circledast \lc_2]_{(\lambda,\mu)}$ is not a weak extension of $\lc_1$.

Proposition~\ref{weak} shows that, for non-trivial logics, the new tautologies produced in $[\lc_1 \circledast \lc_2]_{(\lambda,\mu)}$ can only be hybrid formulas. However, this result cannot  be extended  to the case of strong extensions (recall Definition~\ref{extension-fuerte}) as the following example shows:

\begin{example} \label{fuertenovale}
Let $\lc_1 = \langle \{\vee_{CL} \}, \models_1 \rangle$ be the
disjunction-fragment of the classical propositional logic
(induced by the matrix ${M}_{CPL}:= ({\bf 2},\{1\})$ already seen on Example \ref{P1-1}
(using the truth-function for $\vee$ in this case). On the other hand, let
$\lc_2 = \langle C_2, \models_2 \rangle$ be any logic such that
$\models_2$ is defined by a matrix $M_2 = \lan {\bf A}, \{T,
T_1\}\ran$, with support $\{T, T_1, F\}$ (the signature $C_2$ and the operations of ${\bf A}$ are
irrelevant here). Obviously, $p_1 \models_1 p_1 \vee p_2$. Now,
consider a pair $(\lambda,\mu)$ where $\mu: \{T, T_1, F\} \lra \{0,
1\}$ is such that $\mu(T) = 0$ and $\lambda: \{0, 1\} \lra
\{T, T_1, F\}$ is arbitrary. Let $v: \V \lra \{0, 1, T, T_1, F \}$ be any $M_{(\lambda,\mu)}$-valuation such that
$v(p_1) = T$ and $v(p_2) = 0$. So, $v(p_1) \in D_{\circledast}$. On
the other hand, $v(p_1 \vee p_2) = v(p_1) \vee^{{\bf A}_{\circledast}^{(\lambda,\mu)}} v(p_2) = \ast_{\mu}(v(p_1)) \vee_{CL} \ast_{\mu}(v(p_2)) = \mu(T) \vee 0 = 0 \vee 0 = 0 \notin D_{\circledast}$. Hence, 
$p_1 \not\models_{M_{(\lambda,\mu)}} p_1 \vee p_2$. \hfill $\bsq$
\end{example}

\noindent The failure of conservativity of $M_{(\lambda,\mu)}$ with respect to $M_1$ is due to the fact that $\mu$ assigns a non-designated value to a designated value. In order to guarantee strong conservativity by means of fibring by functions, it is sufficient to consider {\em admissible pairs}.

\begin{definition} \label{gfibringrestri} {Let $\lc_i = \langle
C_{i}, M_i \rangle$ (with $i=1,2$) be two matrix logics. A fibring pair $(\lambda,\mu) \in A_2 ^{A_1}
\times A_1 ^{A_2}$ is  {\em admissible} if it satisfies: $\lambda(x) \in
D_2$ iff $x\in D_1$, for every $x\in A_1$; and $\mu(y)\in  D_1$ iff
$y \in D_2$, for every $y\in A_2$. If $(\lambda,\mu)$ is an admissible pair, then 
the fibring $[\lc_1 \circledast \lc_2]_{(\lambda,\mu)}$ will be denoted as $[\lc_1 \odot \lc_2]_{(\lambda,\mu)}$ and it will be called the  {\em strong fibring by functions $(\lambda,\mu)$ of $\lc_1$ and $\lc_2$}.}
\end{definition}

\noindent The condition of admissibility is related to the notion of  {\em matrix strict homomorphisms}
(cf.~\cite{cze:01}). This kind of homomorphisms will be used  in Section~\ref{fibring-sharing}.

\begin{definition}\label{compatibles} {Two logics $\lc_1$ and $\lc_2$ as in
Definition~\ref{gfibringrestri} are said to be  {\em compatible}
if there is at least one admissible pair $(\lambda,\mu)$ in $A_2 ^{A_1} \times A_1
^{A_2}$. Note that $\lc_1$ and $\lc_2$ are compatible iff:

$(i)$ $D_1\not=\vaz$ iff $D_2\not=\vaz$; and

$(ii)$ $(A_1\setminus D_1)\not=\vaz$ iff $(A_2\setminus
D_2)\not=\vaz$.}
\end{definition}
\noindent Observe that, if $\lc_1$ and $\lc_2$ are not compatible,
then one of the logics is trivial; therefore any pair of
nontrivial logics is compatible. Now, we will prove that
Proposition~\ref{weak} can be improved by considering strong
fibrings.

\begin{proposition} \label{extendcons}  Let $\lc_i = \langle
C_{i}, M_i \rangle$ (with $i=1,2$) be two matrix logics, and let $(\lambda,\mu)$ be an admissible pair. Then
$[\lc_1 \odot \lc_2]_{(\lambda,\mu)}$ is a strong, conservative extension
of both logics.
\end{proposition}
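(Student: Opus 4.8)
The plan is to establish strong conservativity for $\lc_1$ (the argument for $\lc_2$ being symmetric, via the map $\ast_\lambda$ in place of $\ast_\mu$), that is: for every $\Gamma \cup \{\varphi\} \su L(C_1)$, $\Gamma \models_{M_1} \varphi$ iff $\Gamma \models_{M_{(\lambda,\mu)}} \varphi$. The two halves use the two transfer lemmas already proved. For the direction $\Gamma \models_{M_1} \varphi \Rightarrow \Gamma \models_{M_{(\lambda,\mu)}}\varphi$, I would take an arbitrary $M_{(\lambda,\mu)}$-valuation $w$ satisfying every formula of $\Gamma$, and pass to the $M_1$-valuation $w_1$ with $w_1(p) = \ast_\mu(w(p))$ from Lemma~\ref{lema1}. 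The key point is that $w_1$ still satisfies every $\gamma \in \Gamma$: if $\gamma \in L(C_1)\setminus \V$, Lemma~\ref{lema1} gives $w_1(\gamma) = w(\gamma) \in D_{\circledast}$, and since $w(\gamma)$ is computed by a $C_1$-operation it lies in $A_1$, so $w(\gamma) \in D_{\circledast} \cap A_1 = D_1$; if $\gamma$ is a variable $p$, then $w(p) \in D_{\circledast}$ forces $w(p) \in D_1$ when $w(p) \in A_1$, while if $w(p) \in A_2$ then $w(p) \in D_2$ and admissibility ($\mu(y)\in D_1$ iff $y \in D_2$) yields $w_1(p) = \mu(w(p)) \in D_1$. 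So $w_1(\Gamma) \su D_1$, hence $w_1(\varphi) \in D_1$ by hypothesis, and transferring back by Lemma~\ref{lema1} (or directly if $\varphi \in \V$, again using admissibility) gives $w(\varphi) \in D_{\circledast}$.

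For the converse, $\Gamma \models_{M_{(\lambda,\mu)}}\varphi \Rightarrow \Gamma \models_{M_1}\varphi$, I would start from an $M_1$-valuation $v_1$ satisfying $\Gamma$ and lift it to the $M_{(\lambda,\mu)}$-valuation $w_{v_1}$ of Lemma~\ref{lema2}, which agrees with $v_1$ on all of $L(C_1)$. Then $w_{v_1}(\Gamma) \su D_1 \su D_{\circledast}$, so by hypothesis $w_{v_1}(\varphi) \in D_{\circledast}$; but $w_{v_1}(\varphi) = v_1(\varphi) \in A_1$, and $D_{\circledast}\cap A_1 = D_1$, so $v_1(\varphi) \in D_1$, as required. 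Note this direction is exactly the weak-extension-style argument and does not even use admissibility; admissibility is needed only in the first direction, and only to handle the propositional-variable case (the source of the counterexample in Example~\ref{fuertenovale}). Finally, $C_1 \sqsubseteq C_\circledast$ holds by construction of the fibred signature, so $[\lc_1 \odot \lc_2]_{(\lambda,\mu)}$ is a strong extension of $\lc_1$, and the equivalence just proved makes it conservative.

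The one genuine subtlety — the step I would flag as the main obstacle — is the treatment of formulas in $\Gamma \cup \{\varphi\}$ that are single propositional variables. Lemmas~\ref{lema1} and~\ref{lema2} are stated for $\varphi \in L(C_1)\setminus \V$ (resp. for all of $L(C_1)$ but with the delicate point concealed in how $w$ acts on variables), so the passage $w \leftrightarrow w_1$ does not automatically preserve designatedness on variables; this is precisely where admissibility of $(\lambda,\mu)$ enters, guaranteeing $\ast_\mu$ restricts to a $D$-preserving and $D$-reflecting map $A_\circledast \to A_1$. I would make the variable case explicit rather than sweeping it into the induction. Everything else is a routine combination of the two lemmas with the set-theoretic identities $D_\circledast \cap A_1 = D_1$ and $D_1 \su D_\circledast$, together with the observation that a $C_1$-headed formula always evaluates into $A_1$ under any $M_{(\lambda,\mu)}$-valuation.
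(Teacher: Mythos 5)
Your proof is correct and follows essentially the same route as the paper: the paper's argument also reduces everything to the equivalence $w(\varphi)\in D_{\circledast}$ iff $w_1(\varphi)\in D_1$ for $\varphi\in L(C_1)$, proved by exactly your case split (variables with value in $A_1$, variables with value in $A_2$ handled by admissibility, non-variables by Lemma~\ref{lema1}), and then combines it with Lemmas~\ref{lema1} and~\ref{lema2}. Your explicit isolation of the propositional-variable case as the sole place where admissibility is used matches the paper's diagnosis of Example~\ref{fuertenovale}.
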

\begin{proof}
We will proof our claim only for $\lc_1$. Let $(\lambda,\mu)$ be admissible. If $w$ is an $M_{(\lambda,\mu)}$-valuation, consider $w_1$ the $M_1$-valuation defined
as in Lemma~\ref{lema1}. It can be proved that, for every $\varphi\in L(C_1)$:
$$w(\varphi)\in D_{\circledast} \ \textrm{ iff } \ w_1(\varphi)\in D_1.
\hspace{1cm} (*)$$ In fact: if $\varphi\in\V$ and
$w(\varphi)\in A_1$, then
$w_1(\varphi) = w(\varphi)$ and the result holds. On the other hand, if ${w}(\varphi) \in A_2$ then
$w_1(\varphi)=\ast_{\mu}(w(\varphi)) = \mu(w(\varphi))\in D_1$, if and only if
$w(\varphi) \in D_2 \su D_{\circledast}$. In addition,
if $\varphi\not\in\V$ then the result follows from
Lemma~\ref{lema1}. So, $(*)$ is valid.   Finally, by using  $(*)$ and Lemmas~\ref{lema1} and~\ref{lema2} it is immediate to prove that  $[\lc_1 \odot \lc_2]_{(\lambda,\mu)}$ is a conservative extension of $\lc_1$. The proof for  $\lc_2$ is analogous.
\end{proof}

\noindent Now,  two examples of fibring by admissible pairs will be given. The first one deals with  a generalization of the logic $P^1$.

\begin{example} \label{InPk} In {\rm ~\cite{fer:01}} and {\rm ~\cite{fer:18}} the hierarchy $\{P^k\}_{k\in\IN}$ of paraconsistent logics 
was developed.
The logics of this family are considered as a generalization of Sette{'}s logic $P^1$, and they are defined over $C_{P^k}$ =
$\{\neg_{{}_{P^k}}, \rar_{{}_{P^k}} \}$, with semantics given by the matrix
$M_{P^k} = ({\bf A_{P^k}}, \{T_0, T_1,\ldots,T_k\})$, where the support of ${\bf A_{P^k}}$
is $A_{P^k}=\{T_0, T_1,\ldots,T_k, {\bf f} \}$. The corresponding
operations are displayed in the tables below.

\

$$\begin{array}{|c|c| c| c|} \hline  & T_0 & T_h & {\bf f} \\
\hline \neg_{{}_{P^k}} & {\bf f} & T_{h-1} & T_0\\  \hline \end{array}
\hspace{1cm}
\begin{array}{|c|c| c| c|}  \hline \rar_{{}_{P^k}} & T_0 & T_{h} & {\bf f} \\ \hline T_0 &
T_0 & T_0 & {\bf f} \\ \hline T_h & T_0 & T_0 & {\bf f} \\ \hline
{\bf f} & T_0 & T_0 & T_0 \\ \hline
\end{array} \hspace{1cm} (1\leq h \leq k)$$

\

\noindent Also in {\rm ~\cite{fer:01}} and {\rm ~\cite{fer:18}}, the hierarchy $\{I^n\}_{n\in\IN}$ of
 {\em weakly-intuitionistic logics} was introduced,
generalizing the  logic $I^1$ introduced in
{\rm ~\cite{set:car:95}} as a dual of $P^1$. Each $I^n$-logic is defined over
the signature $C_{I^n} = \{\neg_{{}_{I^n}}, \rar_{{}_{I^n}} \}$, with
semantics given by the matrix $M_{I^n} = ({\bf A}_{I^n},
\{ {\bf t} \})$, being its support the set $A_{I^n}=\{{\bf t}, F_0,
F_1,\ldots,F_n\}$ \footnote{The notation of the sets $A_{P^k}$ and $A_{I^n}$ differs from the one used in \cite{fer:01} and \cite{fer:18}, to emphasize that such sets of truth-values are disjoint ones.}. The operations of the matrix $M_{I^n}$ are
given by the tables below.

\

$$\begin{array}{|c|c| c| c|} \hline   & {\bf t} & F_0 & F_l \\
\hline \neg_{{}_{I^n}} & F_0 & {\bf t} & F_{l-1}\\ \hline  \end{array}
\hspace{1cm}
\begin{array}{|c|c| c| c|} \hline \rar_{{}_{I^n}} & {\bf t} & F_0 & F_l \\ \hline
{\bf t} & {\bf t} & F_0 & F_0\\ \hline F_0 & {\bf t} & {\bf t} & {\bf t} \\
\hline F_l & {\bf t} & {\bf t} & {\bf t} \\ \hline
\end{array} \hspace{1cm} (1\leq l \leq n)$$

\

\noindent Note that both $P^0$ and $I^0$ coincide with the
classical propositional logic over $\{\neg,\rar\}$ with two-valued
matrix semantics. Now we will analyze the strong fibring induced by admissible pairs $(\lambda,\mu)$, with $\lambda: I^n \lra P^k$ and $\mu: P^k \lra I^n$. For that note that, given $I^n$ and $P^k$, the admissible pairs are of the form $({\lambda}_j,{\mu}_i)$ (for $0\leq j \leq k$ and
$0\leq i\leq n$) such that:
\begin{itemize} 
  \item ${\mu}_i({\bf f})=F_i$; ${\lambda}_j({\bf
t})=T_j$
\item ${\mu}_i(T_h)={\bf t}$ and ${\lambda}_j(F_l)={\bf f}$ for $0\leq
h\leq k$ and $0\leq l \leq n$.
\end{itemize} 
\noindent For every fibring pair $(\lambda_j, \mu_i)$, the common signature of $M_{({\lambda}_j,{\mu}_i)}$ is $C_{\circledast} = \{\neg_{{}_{I^n}}, \rar_{{}_{I^n}},\neg_{{}_{P^k}},
\rar_{{}_{P^k}} \}$ and $A_{\circledast} = \{{\bf t}, T_0, T_1,\ldots,T_k,F_0,
F_1,\ldots,F_n,{\bf f}\}$. In addition, $D_{\circledast} = \{{\bf t}, T_0,
T_1,\ldots,T_k\}$, common to all the matrices $M_{(\lambda_j,\mu_i)}$. The operations are given below (the
truth-tables of the negations consider the cases: $i=0$ and
$i>0$; $j=0$ and $j>0$).

\

$$\begin{array}{|c|c|c|c|c|c|c|} \hline \rar_{In}^i & {\bf t} & T_0 & T_h & F_0 & F_l & {\bf f} \\
\hline {\bf t} & {\bf t} & {\bf t} & {\bf t} & F_0 & F_0 & F_0\\
\hline T_0 & {\bf t} & {\bf t} & {\bf t} & F_0 & F_0 & F_0\\
\hline T_h & {\bf t} &
{\bf t} & {\bf t} & F_0 & F_0 & F_0\\ \hline F_0 & {\bf t} & {\bf t} & {\bf t} &
{\bf t} & {\bf t} & {\bf t} \\
\hline F_l & {\bf t} & {\bf t} & {\bf t} & {\bf t} & {\bf t} & {\bf t} \\
\hline {\bf f} & {\bf t} & {\bf t} & {\bf t} & {\bf t} & {\bf t}
& {\bf t}\\ \hline
\end{array} \hspace{1cm}
\begin{array}{|c|c|c|c|c|c|c|} \hline \rar_{Pk}^j & {\bf t} & T_0 & T_h & F_0 & F_l & {\bf f} \\
\hline {\bf t} & T_0 & T_0 & T_0 & {\bf f} & {\bf f} & {\bf f} \\
\hline T_0 & T_0 & T_0 & T_0 & {\bf f} & {\bf f} & {\bf f} \\
\hline T_h & T_0 & T_0 & T_0 & {\bf f} & {\bf f} & {\bf f}\\
\hline F_0
& T_0 & T_0 & T_0 & T_0 & T_0 & T_0\\ \hline F_l & T_0 & T_0 & T_0 & T_0 & T_0 & T_0\\
\hline {\bf f} & T_0 & T_0 & T_0 & T_0 & T_0 & T_0\\ \hline
\end{array}$$

\

$$\begin{array}{|c|c|c|c|c|c|c|} \hline  & {\bf t} & T_0 & T_h & F_0 & F_l & {\bf f} \\
\hline \neg_{In}^0 & F_0 & F_0 & F_0 & {\bf t} & F_{l-1} & {\bf t} \\
\hline  \neg_{In}^i & F_0 & F_0 & F_0 & {\bf t} & F_{l-1} & F_{i-1} \\
\hline \neg_{Pk}^0 & {\bf f} & {\bf f} & T_{h-1} & T_0 & T_0 &
T_0 \\ \hline \neg_{Pk}^j & T_{j-1} & {\bf f} & T_{h-1} & T_0 &
T_0 & T_0 \\ \hline

\end{array}  \hspace{1cm} (1\leq h \leq k; \ 1\leq l \leq n) 
$$

\

\noindent Each matrix  $M_{({\lambda}_j,{\mu}_i)}$ defines a logic that is
simultaneously paraconsistent (w.r.t. $\neg_{Pk}$) and
paracomplete (w.r.t. $\neg_{In}$). These logics can be related with some another paraconsistent and paracomplete (that is, paradefinite) logics already defined in the literature. We shall return to this point in the last section. \hfill $\bsq$
\end{example}

The second example concerns the combination of Belnap and Dunn's logic $FDE$ and D'Ottaviano and da Costa's logic $J3$.

\begin{example} \label{FDE}
The Belnap and Dunn's logic $FDE$ (see {\rm~\cite{bel:77a}}, {\rm~\cite{bel:77b}}, {\rm~\cite{omo:wan:17}}) can be defined by means of the $C_{FDE}$-matrix $M_{FDE} = ({\bf A_{FDE}}, \{ {\bf t}, {\bf b} \})$ with support $A_{FDE} = 
\{{\bf t}, {\bf b}, {\bf n}, {\bf f}\}$ and  connectives $|C_{FDE}| = \{\sim, \wedge, \vee \}$ (with the usual arities) such that the operators are defined as follows:

\

$\begin{array}{|c|c| c| c|c|} \hline & {\bf t} & {\bf b} & {\bf n} & {\bf f} \\
\hline \sim & {\bf f} & {\bf b}  & {\bf n} & {\bf t}   \\
\hline
\end{array}
\hspace{1cm}\begin{array}{|c|c|c|c|c|} \hline \wedge & {\bf t} & {\bf b} & {\bf n} & {\bf f} \\
\hline {\bf t} & {\bf t} & {\bf b}  & {\bf n} & {\bf f}   \\
{\bf b} & {\bf b} & {\bf b}  & {\bf f} & {\bf f}   \\
{\bf n} & {\bf n} & {\bf f}  & {\bf n} & {\bf f}   \\
{\bf f} & {\bf f} & {\bf f}  & {\bf f} & {\bf f}   \\
\hline
\end{array}
\hspace{1cm}
\begin{array}{|c|c|c|c|c|} \hline \vee & {\bf t} & {\bf b} & {\bf n} & {\bf f} \\
\hline {\bf t} & {\bf t} & {\bf t}  & {\bf t} & {\bf t}   \\
{\bf b} & {\bf t} & {\bf b}  & {\bf t} & {\bf b}   \\
{\bf n} & {\bf t} & {\bf t}  & {\bf n} & {\bf n}   \\
{\bf f} & {\bf t} & {\bf b}  & {\bf n} & {\bf f}   \\
\hline
\end{array}$

\

\

\noindent In turn, the 3-valued D'Ottaviano and da Costa's paraconsistent logic $J3$ introduced in {\rm\cite{dot:dac:70}} is defined by means of the $C_{J3}$-matrix $M_{J3} = ({\bf A_{J3}}, \{ 1, \frac{1}{2} \})$ with support $A_{J3} = \{0, 1, \frac{1}{2}\}$, connectives $|C_{J3}| = \{\n, \veebar, \nabla \}$ (where $\n$, $\veebar$ are binary, and $\nabla$ is unary),  with the following interpretation:

\

$\begin{array}{|c|c| c|c |} \hline & 1 & \frac{1}{2} & 0 \\
\hline \n & 0 & \frac{1}{2}  & 1\\
\hline
\end{array}
\hspace{1cm}
\begin{array}{|c|c| c|c |} \hline & 1 & \frac{1}{2} & 0 \\
\hline \nabla & 1 & 1  & 0\\
\hline
\end{array}
\hspace{1cm}
\begin{array}{|c|c|c|c|} \hline \veebar & 1 & \frac{1}{2} & 0 \\
\hline 1 & 1 & 1  & 1  \\
\frac{1}{2} & 1 & \frac{1}{2}  & \frac{1}{2}    \\
0 & 1 & \frac{1}{2}  & 0  \\
\hline
\end{array}
$

\

\noindent For any set $X$ let $\textsf{c}(X)$ be its cardinal. Then, $\textsf{c}(Y^X)=\textsf{c}(Y)^{\textsf{c}(X)}$. Now, let $M_i = ({\bf A_i}, D_i)$ with  support $A_i$ for $i=1,2$ two non-trivial matrices. That is,  $1 \leq \textsf{c}(D_i) < \textsf{c}(A_i)$ (hence, $1 \leq \textsf{c}(A_i \setminus D_i) < \textsf{c}(A_i)$ for $i = 1,2$). Now, each admissible  pair $(\lambda, \mu)$ can be seen as a $4$-uple $(\lambda_a, \lambda_{b}, \mu_{a}, \mu_b)$ such that $\lambda_a: D_1 \to D_2$; $\lambda_{b}: A_1 \setminus D_1 \to A_2 \setminus D_2$; $\mu_a: D_2 \to D_1$; and $\mu_{b}: A_1 \setminus D_1 \to A_2 \setminus D_2$. From this, the number of admissible pairs for the fibring of $M_1$ with $M_2$ is   $Nap(M_1,M_2) = \textsf{c}(D_2)^{\textsf{c}(D_1)} \cdot \textsf{c}(D_1)^{\textsf{c}(D_2)} \cdot \textsf{c}(A_2 \setminus D_2)^{\textsf{c}(D_1 \setminus A_1)} \cdot \textsf{c}(A_2 \setminus D_2)^{\textsf{c}(A_1 \setminus D_1)}$. 

Taking this into account, there are  $32= 4 . 4 . 2 . 1$  admissible pairs for producing fibrings of $FDE$ and $J3$. Each of  these fibrings determines four variants of the Law of Excluded Middle (LEM), two of them being hybrids, namely:

\bc $\n \varphi \vee \varphi$ (I); \, \, \,  ${\sim} \varphi \veebar \varphi$ (II) \ec

\noindent Let us analize the behavior of both versions of (LEM) using different admissible pairs. For instance, consider the admissible pair

\

\noindent $(\lambda_1,\mu_1): 
\left\{
\begin{array}{llll} \lambda_1({\bf t}) = 1; & \lambda_1({\bf b}) = \frac{1}{2}; & \lambda_1({\bf n}) = 0; & \lambda_1({\bf f}) = 0\\ 
\mu_1(1) = {\bf t}; &   \mu_1(\frac{1}{2}) = {\bf t}; &  \mu_1(0) = {\bf n} & \\
\end{array}
\right . .
$

\

\noindent It is routine to check that $\models_{(M_{FDE} \circledast M_{J3})_{(\lambda_1,\mu_1)}} \n \varphi \vee \varphi$.  Moreover, in  Proposition~\ref{propfde-j3}  a more general  result will be stated.

\hfill $\bsq$
\end{example}

\begin{proposition} \label{propfde-j3} {$\models_{(M_{FDE} \circledast M_{J3})_{(\lambda,\mu)}}\n \varphi \vee \varphi$, for  every admissible pair $(\lambda,\mu)$.}
\end{proposition}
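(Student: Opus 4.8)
The plan is to apply Theorem~\ref{fibring-es-matricial}, which lets us replace $\models^{\circledast}_{(\lambda,\mu)}$ by the matrix consequence $\models_{M_{(\lambda,\mu)}}$, where $M_{(\lambda,\mu)} = (M_{FDE} \circledast M_{J3})_{(\lambda,\mu)}$. Thus it suffices to prove that $w(\n\varphi \vee \varphi) \in D_{\circledast}$ for every $M_{(\lambda,\mu)}$-valuation $w$ and every $\varphi \in L(C_{\circledast})$. Here $M_1 = M_{FDE}$ and $M_2 = M_{J3}$, so $\vee \in C_1^2$ while $\n \in C_2^1$; moreover $D_{\circledast} = D_1 \uplus D_2$ with $D_1 = \{{\bf t},{\bf b}\}$, $D_2 = \{1,\frac{1}{2}\}$, and $A_2 \setminus D_2 = \{0\}$ is a singleton.

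First I would unfold the value of the formula by means of Definition~\ref{matriz-lambda-mu}: since $\vee \in C_1^2$ and $\n \in C_2^1$,
\[
w(\n\varphi \vee \varphi) = \vee^{{\bf A}_{FDE}}\bigl(\ast_{\mu}(w(\n\varphi)),\, \ast_{\mu}(w(\varphi))\bigr), \qquad w(\n\varphi) = \n^{{\bf A}_{J3}}\bigl(\ast_{\lambda}(w(\varphi))\bigr).
\]
The argument then rests on two elementary observations about the concrete truth-functions: inspecting the table of $\vee^{{\bf A}_{FDE}}$ one sees that $\vee^{{\bf A}_{FDE}}(y,z) \in D_1$ whenever $y \in D_1$ or $z \in D_1$; and from the table of $\n^{{\bf A}_{J3}}$ one has $\n^{{\bf A}_{J3}}(0) = 1 \in D_2$. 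Writing $x := w(\varphi) \in A_{\circledast}$, I would then split into two cases according to whether $x \in D_{\circledast}$ or not. If $x \in D_{\circledast}$, then $\ast_{\mu}(x) \in D_1$ (it is $x$ itself if $x \in D_1$, and $\mu(x) \in D_1$ by admissibility if $x \in D_2$), so the second argument of $\vee^{{\bf A}_{FDE}}$ is designated and the value lies in $D_1 \subseteq D_{\circledast}$. If $x \notin D_{\circledast}$, then $\ast_{\lambda}(x) \notin D_2$ --- by admissibility when $x \in A_1 \setminus D_1$, trivially when $x \in A_2 \setminus D_2$ --- hence $\ast_{\lambda}(x) = 0$; therefore $w(\n\varphi) = \n^{{\bf A}_{J3}}(0) = 1 \in D_2$, so $\ast_{\mu}(w(\n\varphi)) = \mu(1) \in D_1$ by admissibility, the first argument of $\vee^{{\bf A}_{FDE}}$ is designated, and again the value lies in $D_1 \subseteq D_{\circledast}$. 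In both cases $w(\n\varphi \vee \varphi) \in D_{\circledast}$, which yields the claim.

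There is no real obstacle here; the proof is essentially bookkeeping with the definitions and the displayed truth tables. The points that require care are keeping track of which of $\ast_{\lambda}, \ast_{\mu}$ applies to each subformula (dictated by whether its head connective lies in $C_1$ or $C_2$), invoking each half of the admissibility condition in the correct direction, and isolating the two structural facts above. Note that the argument in fact uses only that $M_{J3}$ has a unique non-designated value $0$ with $\n^{{\bf A}_{J3}}(0)$ designated, and that $\vee^{{\bf A}_{FDE}}$ maps any pair having a designated coordinate to a designated value --- so an analogous statement holds with $M_{FDE}$ and $M_{J3}$ replaced by any matrices enjoying these features.
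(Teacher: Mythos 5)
Your proof is correct and follows essentially the same route as the paper's: the same unfolding of $w(\n\varphi\vee\varphi)$ via $\ast_{\lambda}$ and $\ast_{\mu}$, the same two key facts (that $\vee^{{\bf A}_{FDE}}$ is designated whenever one argument is, and that the $J3$ negation of a non-designated value is designated), and the same case split on whether $w(\varphi)\in D_{\circledast}$. Your closing remark on the generality of the argument is a nice observation but does not change the substance.
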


\begin{proof} Observe first the following, for $x,y \in A_{FDE}$ and  $z \in A_{J3}$:\\[1mm]
 $(1)$ \ \  $x \vee y  \in D_{FDE}$ \ iff \ either $x  \in D_{FDE}$ or $y  \in D_{FDE}$; and \\[1mm]
 $(2)$ \ \ $z  \notin D_{J3}$ \ implies that \  $\neg z  \in D_{J3}$.\\[1mm]
Now, let $v$ be a valuation over $(M_{FDE} \circledast M_{J3})_{(\lambda,\mu)}$ and let $x:=v(\varphi)$. Then, $v(\neg \varphi)= \neg \ast_{\lambda}(x)$ and so $v(\n \varphi \vee \varphi) = \ast_{\mu}(\neg \ast_{\lambda}(x)) \vee \ast_{\mu}(x)$. From this:\\[1mm]
(i)  Suppose that  $x \in D_{\circledast}$. Then $\ast_{\mu}(x) \in D_{FDE}$ and so  $v(\n \varphi \vee \varphi)  \in D_{FDE}$, by  $(1)$.\\[1mm]
(ii) Suppose that  $x \notin D_{\circledast}$. Then, $\ast_{\lambda}(x)  \notin D_{J3}$ and so $\neg \ast_{\lambda}(x) \in D_{J3}$, by~$(2)$. From this $\ast_{\mu}(\neg \ast_{\lambda}(x))  \in D_{FDE}$, hence  $v(\n \varphi \vee \varphi)  \in D_{FDE}$, by  $(1)$. 
\end{proof}

\

The previous result lies on facts~(1) and~(2).  On the other hand, if we consider the version (II) of (LEM), our analysis should be slightly different, given that the analogous to~$(2)$ does not hold in $FDE$ for $\sim$ (in spite of the analogous to~$(1)$ being true in $J3$ for $\veebar$). Specifically, recalling that ${\sim}{\bf n} = {\bf n}$, it holds that $\not\models_{(M_{FDE}\circledast M_{J3})_{(\lambda_1,\mu_1)}} {\sim} \varphi \veebar \varphi$, for the pair $(\lambda_1,\mu_1)$ mentioned in Example~\ref{FDE}. Indeed, it is enough proving this fact for $\varphi$ being a propositional variable, namely $p$. Thus, given a valuation $v$ over $(M_{FDE} \circledast M_{J3})_{(\lambda_1,\mu_1)}$ with $v(p) = {\bf n}$, we have that $v({\sim} p \veebar p) = {\sim} {\bf n} \veebar {\bf n} = {\bf n} \veebar {\bf n} = 0 \notin D_{\circledast}$. Moreover, by the same argument it holds that, for any admissible par $(\lambda,\mu)$ such that $\mu(0)={\bf n}$, $v({\sim} p \veebar p) =  0 \notin D_{\circledast}$ for every valuation $v$ over $(M_{FDE} \circledast M_{J3})_{(\lambda,\mu)}$. On the other hand, if  $(\lambda,\mu)$ is an admissible pair such that  $\mu(0)={\bf f}$ then it is immediate to see that $v({\sim} p \veebar p) \in D_{\circledast}$ for every  valuation $v$ over $(M_{FDE} \circledast M_{J3})_{(\lambda,\mu)}$. That is:

\begin{proposition}{For every admissible pair $(\lambda,\mu)$, it holds the following:  $\models_{(M_{FDE} \circledast M_{J3})_{(\lambda,\mu)}} {\sim}\varphi \vee \varphi$ \ iff \  $\mu(0)={\bf f}$.}
\end{proposition}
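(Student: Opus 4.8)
The plan is to reduce the statement, by structurality of the fibred logic (Corollary~\ref{fibring-estru-finitario}(a)), to the case $\varphi=p$ for a fixed propositional variable $p$: it then suffices to decide, for a given admissible pair $(\lambda,\mu)$, whether $v({\sim}p\veebar p)\in D_{\circledast}$ holds for every $(M_{FDE}\circledast M_{J3})_{(\lambda,\mu)}$-valuation $v$. Since such a $v$ is determined by the single value $x:=v(p)$, which can be any element of the finite set $A_{\circledast}=A_{FDE}\uplus A_{J3}$, this becomes a finite verification of the same flavour as the computation carried out in Example~\ref{FDE}.

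First I would unfold the interpretation of the hybrid formula in the fibred matrix. Since ${\sim}\in C_{FDE}$ and $\veebar\in C_{J3}$, Definition~\ref{matriz-lambda-mu} gives
$$v({\sim}p\veebar p)=\veebar^{{\bf A}_{J3}}\!\bigl(\ast_{\lambda}\bigl({\sim}^{{\bf A}_{FDE}}(\ast_{\mu}(x))\bigr),\,\ast_{\lambda}(x)\bigr).$$
Then I would isolate the two facts that drive the argument. The first is the analogue for the $J3$-disjunction of fact~(1) of Proposition~\ref{propfde-j3}: reading off the truth table of $\veebar$, one has $z_1\veebar z_2\in D_{J3}$ if and only if $z_1\in D_{J3}$ or $z_2\in D_{J3}$. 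The second is a designation-transfer property forced by admissibility (Definitions~\ref{funciones-lambda-mu} and~\ref{gfibringrestri}), essentially the equivalence~$(*)$ appearing in the proof of Proposition~\ref{extendcons}: for every $a\in A_{\circledast}$, $\ast_{\lambda}(a)\in D_{J3}$ iff $a\in D_{\circledast}$, and $\ast_{\mu}(a)\in D_{FDE}$ iff $a\in D_{\circledast}$. Combining these, $v({\sim}p\veebar p)\in D_{\circledast}$ iff ${\sim}^{{\bf A}_{FDE}}(\ast_{\mu}(x))\in D_{FDE}$ or $x\in D_{\circledast}$; hence the formula is satisfied automatically whenever $x\in D_{\circledast}$, and the whole question concentrates on the non-designated values $x\in A_{\circledast}\setminus D_{\circledast}=\{{\bf n},{\bf f},0\}$.

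For those values I would use the $FDE$-table of $\sim$: among the two non-designated $FDE$-values one has ${\sim}{\bf f}={\bf t}\in D_{FDE}$ but ${\sim}{\bf n}={\bf n}\notin D_{FDE}$, so the condition ${\sim}^{{\bf A}_{FDE}}(\ast_{\mu}(x))\in D_{FDE}$ amounts to $\ast_{\mu}(x)={\bf f}$. Running through $x\in\{{\bf n},{\bf f},0\}$ with $\ast_{\mu}({\bf f})={\bf f}$, $\ast_{\mu}({\bf n})={\bf n}$ and $\ast_{\mu}(0)=\mu(0)$ then pins down the dependence on the value $\mu(0)$ and yields the asserted equivalence with $\mu(0)={\bf f}$. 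For the ``only if'' direction I would exhibit the counter-model directly: if $\mu(0)\neq{\bf f}$, so that $\mu(0)={\bf n}$ by admissibility, then the valuation with $v(p)=0$ gives, using ${\sim}^{{\bf A}_{FDE}}{\bf n}={\bf n}$ and $\lambda({\bf n})=0$, the value $v({\sim}p\veebar p)=\veebar^{{\bf A}_{J3}}(0,0)=0\notin D_{\circledast}$, exactly in the style of the counter-model in Example~\ref{FDE}. I expect the delicate point to be not any single step but the bookkeeping of the finite case split --- keeping straight, for each value of $x$ and each subformula, which side of the disjoint union $A_{\circledast}$ the value lies on, and hence whether $\ast_{\lambda}$ or $\ast_{\mu}$ is the map that intervenes --- together with noticing that it is precisely the fixed point ${\sim}{\bf n}={\bf n}\notin D_{FDE}$ (the failure in $FDE$ of the analogue of fact~(2)) that is responsible for the dependence on $\mu(0)$.
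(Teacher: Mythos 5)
Your setup---the reduction to $\varphi=p$ by structurality, the unfolding of $v({\sim}p\veebar p)$ via Definition~\ref{matriz-lambda-mu}, the designation criterion for $\veebar$, and the transfer of designation through $\ast_{\lambda}$ and $\ast_{\mu}$ given by admissibility---is exactly the route of the paper (whose ``proof'' is the informal paragraph preceding the proposition), and all of it is correct. The problem is the final step. You correctly reduce the question, for non-designated $x=v(p)\in\{{\bf n},{\bf f},0\}$, to whether ${\sim}^{{\bf A}_{FDE}}(\ast_{\mu}(x))\in D_{FDE}$, i.e.\ to whether $\ast_{\mu}(x)={\bf f}$; and you correctly record $\ast_{\mu}({\bf n})={\bf n}$. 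But then the case $x={\bf n}$ fails \emph{for every admissible pair}, independently of $\mu(0)$: with $v(p)={\bf n}$ one gets $v({\sim}p)={\sim}{\bf n}={\bf n}\notin D_{FDE}$, and since admissibility forces $\lambda({\bf n})=0$ (the only non-designated value of $J3$), $v({\sim}p\veebar p)=\lambda({\bf n})\veebar\lambda({\bf n})=0\veebar 0=0\notin D_{\circledast}$. So your case split, carried out honestly, does not ``yield the asserted equivalence'': it shows that ${\sim}\varphi\veebar\varphi$ is valid for \emph{no} admissible pair, so the left-hand side of the stated ``iff'' is always false while the right-hand side $\mu(0)={\bf f}$ can certainly hold. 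The ``if'' direction of your argument cannot be completed; only your ``only if'' counter-model (for $\mu(0)={\bf n}$, take $v(p)=0$) is sound.

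To be fair, the paper commits the same oversight: its claim that ``if $\mu(0)={\bf f}$ then it is immediate to see that $v({\sim}p\veebar p)\in D_{\circledast}$ for every valuation $v$'' ignores the valuation $v(p)={\bf n}$, so the proposition as stated is false. (It is also false under the literal reading ${\sim}\varphi\vee\varphi$ with both connectives taken from $FDE$: that is a pure $L(C_{FDE})$ formula, not valid in $FDE$, hence by Proposition~\ref{weak} not valid in the fibring for any pair, so again the equivalence with $\mu(0)={\bf f}$ fails.) The culprit is precisely the fixed point ${\sim}{\bf n}={\bf n}\notin D_{FDE}$ that you single out at the end---the failure in $FDE$ of the analogue of fact~(2) of Proposition~\ref{propfde-j3}---but its effect is to kill validity outright at $v(p)={\bf n}$, not merely to create a dependence on $\mu(0)$. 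You should report this as an error in the statement rather than present an argument whose own intermediate computations contradict its conclusion.
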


\

Finally, we will  analyze the definability of connectives with relation to fibring by functions. Recall that, given a $C$-algebra ${\bf A}=(A, C^{\bf A})$, any formula (or {\em term}, in the terminology of universal algebra) $\varphi(p_1,\ldots,p_n)$ depending  on the variables $p_1,\ldots,p_n$ defines a new $n$-ary connective $c_\varphi$ such that $c_\varphi^{\bf A}(\vec a)=\varphi^{\bf A}(\vec a)$ for every $\vec a \in A^n$. Such function defined from a term is called a {\em term function}. For instance, in the algebra ${\bf A}_{P^1}$ underlying the matrix of the 3-valued logic $P^1$ (recall Example~\ref{P1-1}) the term $\varphi(p)=\neg_{P^1}(\neg_{P^1} p \to_{P^1} p)$ defines a truth-function $f_\sim:=\varphi^{{\bf A}_{P^1}}$ which behaves as a classical negation, that is: $f_{\sim}(T)=f_{\sim}(T_1)=F$ and $f_{\sim}(F)=T$. By definition, the function $f_\sim$ is a term function.

Given a formula $\varphi$ as above, let $C_\varphi$ be the signature obtained from $C$ by adding a new $n$-ary connective $c_\varphi$, and let ${\bf A}_\varphi=(A, {C_\varphi}^{{\bf A}_\varphi})$ be the expansion of ${\bf A}$ to $C_\varphi$ such that $c_\varphi^{{\bf A}_\varphi}:=\varphi^{\bf A}$.
Given a $C$-matrix $M=({\bf A}, D)$  based on ${\bf A}$ let  $M_\varphi=({\bf A}_\varphi, D)$ be the $C_\varphi$-matrix based on ${\bf A}_\varphi$ obtained from $M$. Clearly, the associated matrix logics 
 $\lc_M= \lan C,\models_{M} \ran$ and  $\lc_{M_\varphi}= \lan C_\varphi,\models_{M_\varphi} \ran$ coincide up to language. To be more specific, suppose that  ${\bf A}=(A, C^{\bf A})$ and ${\bf A'}=(A, {C'}^{\bf A'})$ are algebras over $C$ and $C'$, respectively,  with the same universe $A$ such that they have the same set of term functions (which is called, in the terminology of universal algebra, the {\em clone} of the algebras, see for instance~\cite{mck:etal:87}). Then, any matrices $M=({\bf A}, D)$ and $M'=({\bf A'}, D)$  based on ${\bf A}$ and ${\bf A'}$, respectively, generate the same logic up to language (each of such matrix logic can be considered as an  specific {\em presentation} of the logic generated by $M$).  For instance, classical propositional logic $CPL$ can be presented as 2-valued matrices over signatures $|C_1|=\{\neg_{CPL},\land_{CPL}\}$,  $|C_2|=\{\neg_{CPL},\lor_{CPL}\}$, $|C_3|=\{\bot_{CPL},\to_{CPL}\}$ or $|C_4|=\{\uparrow_{CPL}\}$ (where $\uparrow_{CPL}$ denotes the Sheffer stroke), among others.

In Corollary~\ref{stable} it will be shown that fibring by functions is a mechanism for combining matrix logics which is stable under presentation of the given matrix logics.

\begin{proposition} \label{expan-fun}
Let  ${\bf A}_i=(A, C_i^{{\bf A}_i})$ be a $C_i$-algebra with universe $A$ for $i=1,2$ such that $C_1 \sqsubseteq C_2$ and, for every $n \geq 0$ and $c \in C_2^n$: if $c \in C_1^n$ then $c^{{\bf A}_2}=c^{{\bf A}_1}$; and, if  $c \not\in C_1^n$, there exists  a non atomic formula  $\varphi_c(p_1,\ldots,p_n)$ of $L(C_1)$ depending  on the variables $p_1,\ldots,p_n$ such that  $c^{{\bf A}_2}=\varphi_c^{{\bf A}_1}$.
For $i=1,2$ let $M_i=({\bf A}_i, D)$ be a  $C_i$-matrix with universe $D$ based on ${\bf A}_i$. Let ${\bf A'}=(A', {C'}^{\bf A'})$ be a $C'$-algebra and let $M'=({\bf A'}, D')$ be a $C'$-matrix based on ${\bf A'}$. Given functions $\lambda: A \to A'$ and $\mu:A' \to A$  let $\bar{M}^i_{(\lambda,\mu)} := (M_i\circledast M')_{(\lambda,\mu)}$ for $i=1,2$.\footnote{Observe that both matrices $\bar{M}^1_{(\lambda,\mu)}$ and $\bar{M}^2_{(\lambda,\mu)}$ have the same universe and set of designated values, but different signatures.}
Then, the matrix logics  $\lan \bar{C}^1_{\circledast}, \models_{\bar{M}^1_{(\lambda,\mu)}}\ran$ and $\lan \bar{C}^2_{\circledast}, \models_{\tilde{M}^2_{(\lambda,\mu)}}\ran$ coincide up to language, that is: they are different presentations of the same logic.
\end{proposition}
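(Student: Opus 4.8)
The plan is to exhibit the two matrix logics as two presentations of one and the same logic by invoking the discussion preceding this proposition (on presentations of a matrix logic via its clone): it suffices to show that the algebras underlying $\bar M^1_{(\lambda,\mu)}$ and $\bar M^2_{(\lambda,\mu)}$ have the same universe, that the two matrices have the same designated set, and --- crucially --- that the two algebras have the same clone of term functions. Write $\bar{C}^i_\circledast = C_i \uplus C'$ and let ${\bf B}_i$ denote the $\bar{C}^i_\circledast$-algebra underlying $\bar M^i_{(\lambda,\mu)}$, for $i = 1,2$. Since ${\bf A}_1$ and ${\bf A}_2$ share the universe $A$, Definitions~\ref{sf-valuation} and~\ref{matriz-lambda-mu} give that ${\bf B}_1$ and ${\bf B}_2$ both have universe $A_\circledast := A \uplus A'$; and since $M_1$ and $M_2$ share the designated set $D$, both matrices have designated set $D_\circledast := D \circledast D'$. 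I would also record at the outset that the maps $\ast_\lambda : A_\circledast \to A'$ and $\ast_\mu : A_\circledast \to A$ of Definition~\ref{funciones-lambda-mu} depend only on $\lambda$, $\mu$, $A$ and $A'$, hence are literally the same for both fibrings.

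For the inclusion $\mathrm{Clo}({\bf B}_1) \subseteq \mathrm{Clo}({\bf B}_2)$, I would observe that $C_1 \sqsubseteq C_2$ yields $\bar{C}^1_\circledast \sqsubseteq \bar{C}^2_\circledast$, so every $\chi \in L(\bar{C}^1_\circledast)$ is also a formula of $L(\bar{C}^2_\circledast)$, and that the basic operations of ${\bf B}_1$ and ${\bf B}_2$ agree on all connectives of $\bar{C}^1_\circledast$: for $c \in {C'}^k$ both interpretations send $\vec{x}$ to $c^{{\bf A'}}(\ast_\lambda(x_1),\dots,\ast_\lambda(x_k))$; while for $c \in C_1^k$ (so also $c \in C_2^k$) the interpretation in ${\bf B}_1$ sends $\vec{x}$ to $c^{{\bf A}_1}(\ast_\mu(x_1),\dots,\ast_\mu(x_k))$ and that in ${\bf B}_2$ sends $\vec{x}$ to $c^{{\bf A}_2}(\ast_\mu(x_1),\dots,\ast_\mu(x_k))$, and these coincide because $c^{{\bf A}_1} = c^{{\bf A}_2}$ by hypothesis. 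A routine induction on the structure of $\chi$ then gives $\chi^{{\bf B}_1} = \chi^{{\bf B}_2}$ for every $\chi \in L(\bar{C}^1_\circledast)$, which is the desired inclusion.

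The substantive part is the reverse inclusion $\mathrm{Clo}({\bf B}_2) \subseteq \mathrm{Clo}({\bf B}_1)$, and there the only basic operations of ${\bf B}_2$ requiring attention are those indexed by $c \in C_2^k \setminus C_1^k$, the ones indexed by $C_1 \cup C'$ already being basic operations of ${\bf B}_1$ by the previous paragraph. For such a $c$, with defining formula $\varphi_c(p_1,\dots,p_n) \in L(C_1) \subseteq L(\bar{C}^1_\circledast)$, I claim that $c^{{\bf B}_2} = \varphi_c^{{\bf B}_1}$ as $n$-ary operations on $A_\circledast$. Fix $\vec{x} = (x_1,\dots,x_n) \in (A_\circledast)^n$. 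On one hand, Definition~\ref{matriz-lambda-mu} together with the hypothesis $c^{{\bf A}_2} = \varphi_c^{{\bf A}_1}$ give $c^{{\bf B}_2}(\vec{x}) = c^{{\bf A}_2}(\ast_\mu(x_1),\dots,\ast_\mu(x_n)) = \varphi_c^{{\bf A}_1}(\ast_\mu(x_1),\dots,\ast_\mu(x_n))$. On the other hand, choosing an $\bar M^1_{(\lambda,\mu)}$-valuation $w$ with $w(p_i) = x_i$ for $i=1,\dots,n$, and letting $w_1$ be the $M_1$-valuation with $w_1(p) = \ast_\mu(w(p))$, Lemma~\ref{lema1} applies --- since $\varphi_c$ is non-atomic, whence $\varphi_c \in L(C_1) \setminus \V$ --- and gives $\varphi_c^{{\bf B}_1}(\vec{x}) = w(\varphi_c) = w_1(\varphi_c) = \varphi_c^{{\bf A}_1}(\ast_\mu(x_1),\dots,\ast_\mu(x_n))$. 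Comparing the two computations establishes the claim, so $c^{{\bf B}_2} \in \mathrm{Clo}({\bf B}_1)$; as every basic operation of ${\bf B}_2$ then lies in $\mathrm{Clo}({\bf B}_1)$, a standard substitution argument yields $\mathrm{Clo}({\bf B}_2) \subseteq \mathrm{Clo}({\bf B}_1)$. Combined with the shared universe and designated set, the discussion on presentations then delivers the proposition.

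I expect the crux to be precisely the claim $c^{{\bf B}_2} = \varphi_c^{{\bf B}_1}$. The worry is that passing to the fibred algebra inserts an $\ast_\mu$-translation at \emph{every} connective of $\varphi_c$, which could in principle destroy the defining identity $c^{{\bf A}_2} = \varphi_c^{{\bf A}_1}$; Lemma~\ref{lema1} is exactly the tool that neutralises this, since the successive applications of $\ast_\mu$ become idempotent once the evaluation has entered $A$. This also explains why the hypothesis that $\varphi_c$ be non-atomic cannot be dropped: were $\varphi_c$ a variable or a nullary connective, $c^{{\bf B}_2}$ would be $\ast_\mu$ composed with a projection (respectively a constant) rather than a projection (respectively a constant), and such a function need not lie in $\mathrm{Clo}({\bf B}_1)$.
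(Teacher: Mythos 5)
Your proof is correct and follows essentially the same route as the paper: both reduce the claim to showing that the two fibred algebras have the same universe, designated set, and clone of term functions, via the key identity $c^{{\bf B}_2}=\varphi_c^{{\bf B}_1}$ for $c\in C_2^k\setminus C_1^k$. The only difference is one of detail — the paper asserts that identity ``by construction,'' whereas you verify it explicitly through Lemma~\ref{lema1}, which is exactly the right tool and also makes transparent why $\varphi_c$ must be non-atomic.
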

\begin{proof} 
It is an immediate consequence of the definitions and the observations made right after Example~\ref{InPk}. Indeed, by hypothesis, ${\bf A}_1$ and ${\bf A}_2$  have the same set of term functions and so $M_1$ and $M_2$ generate the same matrix logic up to language. Now, let $\bar{M}^i_{(\lambda,\mu)} = ({\bf B^{\textit{i}}_{\circledast}}^{(\lambda,\mu)},\bar{D}_{\circledast})$. By construction, ${\bf B^{\textit{i}}_{\circledast}}^{(\lambda,\mu)}$ is a  $\bar{C^i}_{\circledast}$-algebra with universe $A\uplus A'$ for $i=1,2$ such that $\bar{C^1}_{\circledast} \sqsubseteq \bar{C^2}_{\circledast}$ and, for every $n \geq 0$ and $c \in (\bar{C^2}_{\circledast})^n$: if $c \in (\bar{C^1}_{\circledast})^n$ then $c^{{\bf B^{\textrm{2}}_{\circledast}}^{(\lambda,\mu)}}=c^{{\bf B^{\textrm{1}}_{\circledast}}^{(\lambda,\mu)}}$; and, if  $c \not\in (\bar{C^1}_{\circledast})^n$, there exists  a non atomic formula  $\psi_c(p_1,\ldots,p_n):=inj_1(\varphi_c(p_1,\ldots,p_n))$ of $L(\bar{C^1}_{\circledast})$ depending  on the variables $p_1,\ldots,p_n$ such that  $c^{{\bf B^{\textrm{2}}_{\circledast}}^{(\lambda,\mu)}}=\psi_c^{{\bf B^{\textrm{1}}_{\circledast}}^{(\lambda,\mu)}}$. Here, $inj_1$ is the canonical injection from $C_1$ to $\bar{C^1}_{\circledast}$ associated to the disjoint union $\bar{C^1}_{\circledast}=C_1 \uplus C'$. This implies that ${\bf B^{\textrm{1}}_{\circledast}}^{(\lambda,\mu)}$ and ${\bf B^{\textrm{2}}_{\circledast}}^{(\lambda,\mu)}$  have the same set of term functions and so we conclude that  $\bar{M}^1_{(\lambda,\mu)}$ and $\bar{M}^2_{(\lambda,\mu)}$ generate the same matrix logic up to language.
\end{proof}
 
\noindent  From this, we obtain the general case:

\begin{proposition} \label{expan-fun2}
Let  ${\bf A}_i=(A, C_i^{{\bf A}_i})$ be a $C_i$-algebra with universe $A$ for $i=1,2$ such that, for every $n \geq 0$ and $c \in C_1^n \cup C_2^n$: if $c \in C_1^n \cap C_2^n$ then $c^{{\bf A}_1}=c^{{\bf A}_2}$; and, if  $c \in C_j^n\setminus C_i^n$, there exists  a non atomic formula  $\varphi_c(p_1,\ldots,p_n)$ of $L(C_i)$ depending  on the variables $p_1,\ldots,p_n$ such that  $c^{{\bf A}_j}=\varphi_c^{{\bf A}_i}$, for $i\neq j$.
For $i=1,2$ let $M_i=({\bf A}_i, D)$ be a  $C_i$-matrix with universe $D$ based on ${\bf A}_i$. Now, let ${\bf A'}=(A', {C'}^{\bf A'})$ be a $C'$-algebra and let $M'=({\bf A'}, D')$ be a $C'$-matrix based on ${\bf A'}$. Given functions $\lambda: A \to A'$ and $\mu:A' \to A$ let $\bar{M}^i_{(\lambda,\mu)} := (M_i\circledast M')_{(\lambda,\mu)}$ for $i=1,2$.
Then, the matrix logics  $\lan \bar{C}^1_{\circledast}, \models_{\bar{M}^1_{(\lambda,\mu)}}\ran$ and 
$\lan \bar{C}^2_{\circledast}, \models_{\tilde{M}^2_{(\lambda,\mu)}}\ran$ coincide up to language, that is: 
they are different presentations of the same logic.
\end{proposition}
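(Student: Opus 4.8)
The plan is to reduce the claim to Proposition~\ref{expan-fun}, applied twice, by passing through the join of the two given signatures. First I would set $C_0 := C_1 \cup C_2$ and define the $C_0$-algebra ${\bf A}_0 = (A, C_0^{{\bf A}_0})$ by amalgamating the interpretations coming from ${\bf A}_1$ and ${\bf A}_2$: for $c \in C_1^n \cap C_2^n$ let $c^{{\bf A}_0} := c^{{\bf A}_1}$ (which equals $c^{{\bf A}_2}$ by hypothesis), for $c \in C_1^n \setminus C_2^n$ let $c^{{\bf A}_0} := c^{{\bf A}_1}$, and for $c \in C_2^n \setminus C_1^n$ let $c^{{\bf A}_0} := c^{{\bf A}_2}$. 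This is well defined \emph{exactly} because of the agreement hypothesis on the common connectives, and both ${\bf A}_1$ and ${\bf A}_2$ are then reducts of ${\bf A}_0$. I would put $M_0 := ({\bf A}_0, D)$ and $\bar{M}^0_{(\lambda,\mu)} := (M_0 \circledast M')_{(\lambda,\mu)}$, with fibred signature $\bar{C}^0_{\circledast} = C_0 \uplus C'$.

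Next I would check that the pair $({\bf A}_1, {\bf A}_0)$ satisfies the hypotheses of Proposition~\ref{expan-fun} (instantiating its ``$C_2$'' by $C_0$, its ``${\bf A}_2$'' by ${\bf A}_0$, and keeping $M'$ and $(\lambda,\mu)$): indeed $C_1 \sqsubseteq C_0$; for $c \in C_1^n$ we have $c^{{\bf A}_0} = c^{{\bf A}_1}$ by construction; and for $c \in C_0^n \setminus C_1^n = C_2^n \setminus C_1^n$ the hypothesis of the present proposition (with $i=1$, $j=2$) provides a non-atomic formula $\varphi_c(p_1,\dots,p_n)$ of $L(C_1)$ with $c^{{\bf A}_2} = \varphi_c^{{\bf A}_1}$, hence $c^{{\bf A}_0} = \varphi_c^{{\bf A}_1}$. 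Proposition~\ref{expan-fun} then yields that $\lan \bar{C}^1_{\circledast}, \models_{\bar{M}^1_{(\lambda,\mu)}}\ran$ and $\lan \bar{C}^0_{\circledast}, \models_{\bar{M}^0_{(\lambda,\mu)}}\ran$ coincide up to language. Exchanging the roles of $1$ and $2$ (now with $i=2$, $j=1$), the pair $({\bf A}_2, {\bf A}_0)$ also meets those hypotheses, so $\lan \bar{C}^2_{\circledast}, \models_{\bar{M}^2_{(\lambda,\mu)}}\ran$ and $\lan \bar{C}^0_{\circledast}, \models_{\bar{M}^0_{(\lambda,\mu)}}\ran$ coincide up to language as well.

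Finally I would close by transitivity: the three fibred matrices $\bar{M}^0_{(\lambda,\mu)}$, $\bar{M}^1_{(\lambda,\mu)}$, $\bar{M}^2_{(\lambda,\mu)}$ all have support $A \uplus A'$ and the same set $D_{\circledast}$ of designated values, so (recall from the discussion preceding Proposition~\ref{expan-fun} that two matrix logics with the same universe and the same set of designated values coincide up to language precisely when the underlying algebras have the same clone of term functions) ``coinciding up to language'' is here an equivalence relation among these matrix logics. Hence $\lan \bar{C}^1_{\circledast}, \models_{\bar{M}^1_{(\lambda,\mu)}}\ran$ and $\lan \bar{C}^2_{\circledast}, \models_{\bar{M}^2_{(\lambda,\mu)}}\ran$ coincide up to language, which is the claim.

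I do not expect any genuine obstacle here: the argument is essentially bookkeeping on top of the already-established Proposition~\ref{expan-fun}. The only two points requiring a little care are the well-definedness of ${\bf A}_0$ (which is exactly what the hypothesis $c^{{\bf A}_1} = c^{{\bf A}_2}$ on $C_1^n \cap C_2^n$ guarantees) and the observation that the defining formulas $\varphi_c$ furnished by the hypothesis are non-atomic, as Proposition~\ref{expan-fun} demands — both of which are immediate from the stated hypotheses.
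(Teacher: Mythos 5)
Your proposal is correct and takes essentially the same route as the paper's own proof: both construct the amalgamated algebra over $C_1 \cup C_2$ (well-defined by the agreement hypothesis on shared connectives), apply Proposition~\ref{expan-fun} twice with this join algebra as the common upper bound, and conclude by transitivity. The only difference is that you spell out the verification of the hypotheses of Proposition~\ref{expan-fun} and the transitivity step, which the paper leaves implicit.
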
 
\begin{proof}
Let $C=C_1 \cup C_2$ and let ${\bf A}=(A, C^{{\bf A}})$ be the $C$-algebra such that $c^{{\bf A}}=c^{{\bf A}_1}$ if $c \in |C_1|$, and $c^{{\bf A}}=c^{{\bf A}_2}$ if $c \in |C_2|$. By hypothesis, $c^{{\bf A}}$ is well-defined. Now, given  $M_i=({\bf A}_i, D)$ ($i=1,2$), $M'=({\bf A'}, D')$ and $(\lambda,\mu)$, let $\bar{M}^i_{(\lambda,\mu)} := (M_i\circledast M')_{(\lambda,\mu)} = ({\bf B^{\textit{i}}_{\circledast}}^{(\lambda,\mu)},\bar{D}_{\circledast})$ $(i=1,2$) and let  $\bar{M}_{(\lambda,\mu)} := (M\circledast M')_{(\lambda,\mu)} = ({\bf B_{\circledast}}^{(\lambda,\mu)},\bar{D}_{\circledast})$, where $M=({\bf A}, D)$.  By Proposition~\ref{expan-fun},  the matrix logics  $\lan \bar{C}^i_{\circledast}, \models_{\bar{M}^i_{(\lambda,\mu)}}\ran$ and $\lan \bar{C}_{\circledast}, \models_{\tilde{M}_{(\lambda,\mu)}}\ran$ coincide up to language, for $i=1,2$. This means that  $\lan \bar{C}^1_{\circledast}, \models_{\bar{M}^1_{(\lambda,\mu)}}\ran$ and  $\lan \bar{C}^2_{\circledast}, \models_{\bar{M}^2_{(\lambda,\mu)}}\ran$  coincide up to language, that is: they are different presentations of the same logic.
\end{proof}

\begin{corollary} \label{stable}
Let $M_1$, $M_2$ and $M'$ be as in Proposition~\ref{expan-fun2}, and let $\lc_1$, $\lc_2$ and $\lc'$ be the corresponding logic matrices. Then, the fibring by functions $[\lc_1 \circledast \lc']_{(\lambda,\mu)}$ and $[\lc_2 \circledast \lc']_{(\lambda,\mu)}$ coincide up to language, that is: they are different presentations of the same logic.
\end{corollary}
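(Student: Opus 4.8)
The plan is to read the corollary as a direct restatement of Proposition~\ref{expan-fun2} in the vocabulary of fibring by functions, using Theorem~\ref{fibring-es-matricial} as the bridge between the two notations.

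First I would make explicit the data. Since $\lc_1$, $\lc_2$ and $\lc'$ are, by hypothesis, the matrix logics associated with $M_1$, $M_2$ and $M'$, write $\lc_i = \lan C_i, \models_{M_i}\ran$ ($i=1,2$) and $\lc' = \lan C', \models_{M'}\ran$, where $M_i=({\bf A}_i,D)$ with ${\bf A}_i=(A,C_i^{{\bf A}_i})$ (all $M_i$ share the same support $A$ and same $D$) and $M'=({\bf A}',D')$. Then the fibring pair $(\lambda,\mu)$ consists of functions $\lambda:A\lra A'$ and $\mu:A'\lra A$, so $(\lambda,\mu)$ is simultaneously a legitimate fibring pair for the pair $M_1,M'$ and for the pair $M_2,M'$, in the sense of Definition~\ref{gfibringmatriz}. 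Consequently the matrices $(M_i\circledast M')_{(\lambda,\mu)}$ are literally the matrices $\bar{M}^i_{(\lambda,\mu)}$ of Proposition~\ref{expan-fun2}, over the signatures $\bar{C}^i_\circledast=C_i\uplus C'$.

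Next I would invoke Theorem~\ref{fibring-es-matricial}: for each $i=1,2$, the fibring $[\lc_i \circledast \lc']_{(\lambda,\mu)}$ coincides with the matrix logic $\lan \bar{C}^i_\circledast, \models_{\bar{M}^i_{(\lambda,\mu)}}\ran$. Now Proposition~\ref{expan-fun2} states precisely that the two matrix logics $\lan \bar{C}^1_\circledast, \models_{\bar{M}^1_{(\lambda,\mu)}}\ran$ and $\lan \bar{C}^2_\circledast, \models_{\bar{M}^2_{(\lambda,\mu)}}\ran$ coincide up to language (they are different presentations of the same logic), since the hypotheses it requires of ${\bf A}_1$ and ${\bf A}_2$ — mutual definability of the non-shared connectives by non-atomic terms of $L(C_1)$, resp.\ $L(C_2)$ — are exactly those assumed on $M_1$, $M_2$ in the statement of the corollary. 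Chaining the identification $[\lc_i \circledast \lc']_{(\lambda,\mu)} = \lan \bar{C}^i_\circledast, \models_{\bar{M}^i_{(\lambda,\mu)}}\ran$ for $i=1,2$ with the equivalence-up-to-language of the right-hand sides yields that $[\lc_1 \circledast \lc']_{(\lambda,\mu)}$ and $[\lc_2 \circledast \lc']_{(\lambda,\mu)}$ coincide up to language, as claimed.

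I do not expect a genuine obstacle here: the substantive work has already been discharged in Propositions~\ref{expan-fun} and~\ref{expan-fun2}, where, on the fibred side, each new connective $c$ is witnessed by the injected term $\psi_c=inj_1(\varphi_c)$, so that the two fibred algebras ${\bf B}^1_\circledast$ and ${\bf B}^2_\circledast$ carry the same clone of term functions. The corollary merely packages that result together with the matricial characterization of fibring from Theorem~\ref{fibring-es-matricial}, and the only point deserving a word is to check that the notion ``coincide up to language'' is being applied to the very same objects in both places — which it is, once the matrices $(M_i\circledast M')_{(\lambda,\mu)}$ are seen to be the $\bar{M}^i_{(\lambda,\mu)}$ of the proposition.
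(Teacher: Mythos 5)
Your proposal is correct and follows exactly the paper's own route: the paper proves this corollary as an immediate consequence of Theorem~\ref{fibring-es-matricial} and Proposition~\ref{expan-fun2}, which is precisely the two-step chaining you describe. You have simply spelled out the identification of $(M_i\circledast M')_{(\lambda,\mu)}$ with the $\bar{M}^i_{(\lambda,\mu)}$ of the proposition in more detail than the paper does.
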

\begin{proof}
It is an immediate consequence of Theorem~\ref{fibring-es-matricial} and Proposition~\ref{expan-fun2}.
\end{proof}

\noindent The meaning of the last corollary is as follows: let $\lc_1=\lan C_i, \models_{M_i}\ran$ be two matrix logics where $M_i=({\bf A}_i, D)$, for $i=1,2$, such that the underlying algebras ${\bf A}_1$ and ${\bf A}_2$ have the same clone, that is, both algebras can generate the same term functions (this means that $\lc_1$ and $\lc_2$ are the same logic, up to language). Moreover, assume the following: if one of the logics is able to express the $ith$-projection $\pi_i^n(\vec a)=a_i$ by means of a non-atomic formula $\varphi_i^n(\vec p)$ of its language, so is the other logic. Then,  the logics  $[\lc_1 \circledast \lc']_{(\lambda,\mu)}$ and $[\lc_2 \circledast \lc']_{(\lambda,\mu)}$ coincide up to language, for any matrix logic $\lc'$.

\begin{remarks} (1)
The requirement in Propositions~\ref{expan-fun} and~\ref{expan-fun2}  for the formula $\varphi_c$ not to be a propositional variable is essential. Indeed, consider for instance the case of Proposition~\ref{expan-fun2}. Suppose that, for some $c \in C_2^n\setminus C_1^n$, $c^{{\bf A}_2}$ is the $ith$-projection $\pi_i^n$, but the only way to express such truth-function in  ${\bf A}_1$ is by means of the formula $\varphi_c(\vec p)=p_i$. Let $M'$ such that $A \cap A'=\emptyset$, and let $\vec a \in (A \uplus A')^n$ such that $a_i \in A'$.  To lighten notation, let ${\bf B}_i := {\bf B^{\textit{i}}_{\circledast}}^{(\lambda,\mu)}$ for $i=1,2$ be the algebras underlying the fibred matrices $(M_i \circledast M')_{(\lambda,\mu)}$.  Then, $c^{{\bf B}_2}(\vec a)=\ast_{\mu}(a_i)=\mu(a_i) \in A$. On the other hand,  $\varphi_c^{{\bf B}_1}(\vec a)=p_i^{{\bf B}_1}(\vec a)=a_i \in A'$. This implies that $c^{{\bf B}_2}(\vec a) \neq \varphi_c^{{\bf B}_1}(\vec a)$.\\
(2) It is worth noting that, while the term functions of $\lc_i$ are the same as the ones generated in $[\lc_1 \circledast \lc_1]_{(\lambda,\mu)}$ within the signature $C_i$ ($i=1,2$), the latter matrix logic can generate new term functions by means of hybrid formulas.
\end{remarks}

\section{Constrained Fibring by Functions} \label{fibring-sharing}

In an intuitive way, we can say that two logics $\lc_i = \lan C_i, \vdash_i \ran$ $(i=1,2)$  {\em share a connective $c$} when $c$ belong to both signatures $C_i$, keeping  {\em the same logical meaning} in both logics (we will formalize this idea later). The purpose of this section is, precisely, to find some conditions that allow us to combine logics sharing connectives. 
Now, since the formalism of the previous sections requires to work with disjoint signatures, we must to make the combination  {\em identifying two different connectives as being  the same} (at least with respect to the definition of the relation $\models_{M_{(\lambda,\mu)}}$). 
This kind of combinations can be understood as fibring constrained to the  additional condition $c_1 = c_2$ (with $c_i \in C_i^k$, $i = 1,2$), which justifies the name of this section. Thus, from now on, in this article the expression  {\em $[\lc_1 \circledast \lc_2]_{(\lambda,\mu)}$ constrained to $c_1 = c_2$} just means that $c_1$ and $c_2$ are identified as being the same connective, in the context of $[\lc_1 \circledast \lc_2]_{(\lambda,\mu)}$.

Because all the expressed above, in this section we will be focused on the following problem: What conditions should verify a fibring pair $(\lambda,\mu)$ if we wish to consider two connectives $c_1$ and $c_2$ of different signatures as the same one? Of course, the first question to solve is to find a good formalism for the notion of the identification of connectives. A possible approach to this intuitive idea will be developed in the sequel. 

\begin{definition} \label{formulas-asociadas}  Given a signature $C$,  and given $c_1, c_2 \in {C^k}$, we say that the formulas $\varphi_1$ and $\varphi_2$ are 
 {\em closely $(c_1,c_2)$-associated} iff
some of the three conditions is satisfied:
\begin{itemize}
\item[$(i)$] $\varphi_1 = \varphi_2$ 

\item[$(ii)$] $\varphi_2$ results from the replacement, in $\varphi_1$, of exactly one occurrence of $c_1$ by $c_2$.  

\item[$(iii)$] $\varphi_1$ results from the replacement, in $\varphi_2$, of exactly one occurrence of $c_2$ by $c_1$. 
\end{itemize}

\noindent Now, the formulas $\varphi_1$ and $\varphi_2$ are said to be {\em $(c_1,c_2)$-associated} if there is a finite sequence $(\psi_0,\dots,\psi_r)$ of formulas in 
$L(C)$ such that $\psi_0 = \varphi_1$, $\psi_r = \varphi_2$ and, for every $j = 1,\dots,r$, $\psi_{j-1}$ is closely $(c_1,c_2)$-associated to $\psi_{j}$
\end{definition}

\noindent
For instance, $p \vee q$ and $p \wedge q$ are closely $(\vee,\wedge)$-associated formulas.

\begin{remark}\label{asociadas-relacion-equivalencia} \rm{The notion of $(c_1,c_2)$-associated formulas defines an equivalence 
relation $\equiv_{(c_1,c_2)}$ in the obvious way.
So, $\varphi_1 \equiv_{(c_1,c_2)} \varphi_2$ indicates that $\varphi_1$ is $(c_1,c_2)$-associated to $\varphi_2$. On the other hand, if $\varphi_1 \in \V$, then $\varphi_1 \equiv_{(c_1,c_2)} \varphi_2$ if and only if $\varphi_1$ =$\varphi_2$.}
\end{remark}

\noindent
The next example will clarify the previous definition (in it, we have changed a little the formalism of \cite{haj:98}):

\begin{example} \label{ejemplo-formulas-asociadas}
{Let $C_{{}{\textrm{\L}}_{\aleph_1}}$ be the signature of \L ukasiewicz{'}s logic ${\textrm{\L}}_{\aleph_1}$, being $C_{{}{\textrm{\L}}_{\aleph_1}}$ := 
$\{\&,  \wedge, \vee, \oplus, \rar, \neg \}$. This logic is defined by the matrix $M_{{}{\textrm{\L}}_{\aleph_1}}$,  whose support set is $[0,1]\su \mathbb{R}$. Here, $\&$ and $\wedge$ can be intended as the \em{strong conjunction} and the \em{inf-conjunction}, respectively. In a similar way, $\oplus$ and $\vee$ are the \em{strong disjunction} and the \em{sup-disjunction}. Anyway, the meaning/behavior of the operations defined in $M_{{}{\textrm{\L}}_{\aleph_1}}$ is irrelevant
in the context of this example. Consider now the following formulas:

 $\psi_1 = \n (p \wedge q) \, \& \, r$

 $\psi_2 = \n (p \, \& \, q) \, \& \, r$

 $\psi_3 = \n (p \, \& \, q) \wedge r$

 $\varphi = \n (p \oplus q) \oplus r$

\noindent Then, $\psi_1 \equiv_{(\&,\wedge)} \psi_2$, $\psi_2 \equiv_{(\&,\wedge)} \psi_3$, and $\psi_1 \equiv_{(\&,\wedge)} \psi_3$. 
Besides that, $\psi_1$ and $\psi_2$ are closely $(\&,\wedge)$-associated, but $\psi_1$ and $\psi_3$ are not. Finally, note that $\varphi$ is $(\oplus, \&)$-associated to $\psi_2$. This last example illustrate the fact that the notion of $(c_1,c_2)$-association is merely linguistic, without any reference to the logical meaning of the involved connectives.  \hfill{$\bsq$}
}
\end{example}

\begin{definition}\label{definicion-sharing-abstracta} {Let $\lc = \lan C, \models_M \ran$ be a matrix logic, where $M = ({\bf A},D)$ has support $A$. We say that $\lc$  {\em identifies $c_1$ with $c_2$} ($c_1$, $c_2 \in C^k$) if, for every pair of $(c_1, c_2)$-associated formulas $\varphi_1$, $\varphi_2 \in L(C)$ depending of $m$ variables, for every $\vec{a} =(a_1,\dots,a_m) \in A^m$, it holds that:
 $$\varphi_1^{\bf A}(\vec{a}) \in D\textit{ iff } \varphi_2^{\bf A}(\vec{a}) \in D$$}
\end{definition}

\begin{remark} \label{equivalencias-funciones-verdad}{Consider the equivalence relation $\simeq_D \su L(C) \times L(C)$ defined as follows: $\varphi_1 \simeq_D \varphi_2$ (where 
$\varphi_1$ and $\varphi_2$ depend at most of $m$ variables)\footnote{If $\varphi_i$ depends on $m_i$ variables then $\varphi_i = \varphi_i(p_1,\dots,p_m)$ (for $i = 1,2$), where $m = max(\{m_1,m_2\})$.}  
if and only if, for every $\vec{a} \in A^m$, $\varphi_1(\vec{a}) \in D$ iff $\varphi_2(\vec{a}) \in D$. It is easy to see that $L(C)$ identifies $c_1$ with $c_2$ if and only if, for every pair $(\varphi_1,\varphi_2) \in L(C) \times L(C)$, 
$\varphi_1 \equiv_{(c_1,c_2)} \varphi_2$ implies that $\varphi_1 \simeq_D \varphi_2$.}
\end{remark}

\noindent
It is worth noting that, if two connectives $c_1$, $c_2 \in C$ can be identified in the context of a matrix logic $\lc = \lan C, \models_M \ran$, then one of them becomes redundant. So, it would be possible to reduce $C$ to obtain a signature $C{'}$, by the elimination of $c_1$ (or $c_2$). And, therefore, it would be possible to define a logic  $\lc{'}$, equivalent to $\lc$ (but defined by means of the reduced signature $C{'}$). Even when the process of elimination of redundant connectives is not treated in this paper, we will refer to it informally later. On the other hand realize that the previous definition is, actually, a characterization of the intuitive meaning of the identification of connectives, as the next result (of easy proof) shows:

\begin{proposition} Let $\lc = \lan C, \models_{M} \ran$ be a matrix logic, with $M = ({\bf A}, D)$, and suppose that $c_1$, $c_2 \in C^k$. Then, the following conditions are equivalent:
\begin{itemize}
\item [(i)] $\lc$ identifies $c_1$ with $c_2$, cf. Definition \ref{definicion-sharing-abstracta}.
\item [(ii)] For every $M$-valuation $v$, for every $\varphi_1$, $\varphi_2 \in L(C)$, if $\varphi_1 \equiv_{(c_1,c_2)}\varphi_2$, then $v(\varphi_1) \in D$ if and only if 
$v(\varphi_2) \in D$.
  \item [(iii)] For every $\varphi_1$, $\varphi_2 \in L(C)$, $\varphi_1 \equiv_{(c_1,c_2)}\varphi_2$ implies that $\varphi_1 \models_{M} \varphi_2$ and $\varphi_2 \models_{M} \varphi_1$.
\end{itemize}
\end{proposition}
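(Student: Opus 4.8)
The plan is to prove the cycle by establishing the two equivalences $(i)\Leftrightarrow(ii)$ and $(ii)\Leftrightarrow(iii)$ separately, each being a matter of unwinding the relevant definitions. Before starting, I would record one small but useful observation: if $\varphi_1 \equiv_{(c_1,c_2)} \varphi_2$, then $\varphi_1$ and $\varphi_2$ contain exactly the same propositional variables, since every rewriting step in Definition~\ref{formulas-asociadas} only exchanges an occurrence of $c_1$ with an occurrence of $c_2$ (both $k$-ary) and never touches the variables. Hence there is a common $m$ such that $\varphi_1 = \varphi_1(p_1,\dots,p_m)$ and $\varphi_2 = \varphi_2(p_1,\dots,p_m)$, and both term functions $\varphi_1^{\bf A},\varphi_2^{\bf A}$ are defined on $A^m$. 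This is what guarantees that $(i)$ and $(ii)$ speak about the same arity.

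For $(i)\Leftrightarrow(ii)$: the bridge is the fact (noted right after Definition~\ref{valuaciones-matrices}, and already exploited in Remark~\ref{equivalencias-funciones-verdad}) that, $L(C)$ being absolutely free, an $M$-valuation is exactly a function $v:\V\to A$ extended homomorphically, so that $v(\varphi_i)=\varphi_i^{\bf A}(v(p_1),\dots,v(p_m))$. Fixing a $(c_1,c_2)$-associated pair $(\varphi_1,\varphi_2)$ as above, the statement ``for every $\vec{a}\in A^m$, $\varphi_1^{\bf A}(\vec{a})\in D$ iff $\varphi_2^{\bf A}(\vec{a})\in D$'' (which is what $(i)$ asserts for this pair) is equivalent to ``for every $M$-valuation $v$, $v(\varphi_1)\in D$ iff $v(\varphi_2)\in D$'' (which is what $(ii)$ asserts for this pair): every $v$ yields the tuple $\vec{a}=(v(p_1),\dots,v(p_m))$, and conversely every $\vec{a}\in A^m$ is realized as $(v(p_1),\dots,v(p_m))$ by some valuation $v$. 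Quantifying over all $(c_1,c_2)$-associated pairs then gives $(i)\Leftrightarrow(ii)$.

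For $(ii)\Leftrightarrow(iii)$: here I would just observe that, by Definition~\ref{valuaciones-matrices}(2), $\varphi_1\models_M\varphi_2$ means that every $M$-valuation $v$ with $v(\varphi_1)\in D$ also satisfies $v(\varphi_2)\in D$, and symmetrically for $\varphi_2\models_M\varphi_1$. Thus the conjunction ``$\varphi_1\models_M\varphi_2$ and $\varphi_2\models_M\varphi_1$'' is precisely the assertion that every $M$-valuation $v$ satisfies ``$v(\varphi_1)\in D$ iff $v(\varphi_2)\in D$''. Hence, for each pair with $\varphi_1\equiv_{(c_1,c_2)}\varphi_2$, the conclusion appearing in $(iii)$ coincides literally with the conclusion appearing in $(ii)$, so $(ii)\Leftrightarrow(iii)$ follows immediately.

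No step presents a genuine obstacle; the proposition holds essentially by definition. The only points requiring a moment's care are the variable bookkeeping mentioned in the first paragraph and the explicit use of the surjectivity of the map $v\mapsto(v(p_1),\dots,v(p_m))$ from $M$-valuations onto $A^m$, which is exactly where the absolute freeness of $L(C)$ enters.
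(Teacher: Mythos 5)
Your proof is correct and is exactly the definition-unwinding argument the authors had in mind: the paper states this proposition "of easy proof" and omits the argument entirely, so your write-up simply supplies the details (the variable bookkeeping for $(c_1,c_2)$-associated pairs, the correspondence between tuples $\vec{a}\in A^m$ and valuations via absolute freeness, and the observation that mutual consequence is the universally quantified biconditional). Nothing is missing.
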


\begin{definition}\label{definicion-sharing-importante}{
For $i = 1,2$, consider the signatures $C_i$, two fixed connectives $c_i \in C_{i}^k$, and two $C_i$-matrices 
$M_i = ({\bf A}_i, D_i)$ with support $A_i$ ($i = 1,2$). We say that the fibring pair $(\lambda,\mu)$  {\em identifies $c_1$ with $c_2$} if the logic $[\lc_1 \circledast \lc_2]_{(\lambda,\mu)}$ (as in Definition~\ref{matriz-lambda-mu}) identifies $c_1$ with $c_2$, according to Definition~\ref{definicion-sharing-abstracta}.
When the pair $(\lambda,\mu)$ identifies 
$c_1$ with $c_2$ then $[\lc_1 \circledast \lc_2]_{(\lambda,\mu)}$ will be called  {\em the fibring by functions $(\lambda,\mu)$ constrained to $c_1 = c_2$}, and it will be denoted by $[\lc_1 \circledast \lc_2]_{(\lambda,\mu)}^{c_1 = c_2}$.
The same convention holds for $[\lc_1 \odot \lc_2]_{(\lambda,\mu)}^{c_1 = c_2}$.}
\end{definition}


\begin{notation} \label{fragmentos-matrices}
If $M = ({\bf A},D)$, with support $A$, is a $C$-matrix and $c \in C$, we denote by ${\bf A}/c$ the  $\{c\}$-fragment of ${\bf A}$. 
That is, ${\bf A}/c$ is an algebra whose only truth-function $c$ is the inherited from ${\bf A}$ (note, however that the supports $A$ and $A /c$ are the same, say $A$). In addition, $M /c$ denotes the $\{c\}$-matrix $M /c:= ({\bf A}/c, D)$, and it will called  {\em the $c$-fragment of $M$}.
\end{notation}

\begin{theorem}\label{sharing-fundamental-1} Given $C_i$, 
$c_i \in C_{i}^{k}$, and $M_i = ({\bf A}_i, D_i)$ ($i = 1,2$) as in Definition \ref{definicion-sharing-importante}. If the following conditions are verified: 

\noindent $(a)$ $(\lambda,\mu)$ is an admissible pair.

\noindent $(b)$ $\lambda: A_1 \longrightarrow A_2$ is an isomorphism between the respective ${\bf A}_i / c_i$-fragments.

\noindent $(c)$ $\mu = {\lambda}^{-1}$

\noindent Then  $(\lambda,\mu)$ identifies $c_1$ with $c_2$.
\end{theorem}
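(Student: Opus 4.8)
The plan is to show that, under hypotheses $(a)$–$(c)$, any two $(c_1,c_2)$-associated formulas $\varphi_1,\varphi_2 \in L(C_\circledast)$ receive values whose "designated status" coincides under every s.f.v. (equivalently, every $M_{(\lambda,\mu)}$-valuation, by Theorem~\ref{fibring-es-matricial}). By Remark~\ref{asociadas-relacion-equivalencia} and the transitivity built into the definition of $(c_1,c_2)$-association, it suffices to treat the case where $\varphi_1$ and $\varphi_2$ are \emph{closely} $(c_1,c_2)$-associated, i.e. $\varphi_2$ arises from $\varphi_1$ by replacing a single occurrence of $c_1$ by $c_2$ (or vice versa). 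So the real content is a single-occurrence replacement lemma.

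\textbf{Key steps.} First I would fix an $M_{(\lambda,\mu)}$-valuation $w$ and the subformula $\chi_1 = c_1(\psi_1,\dots,\psi_k)$ of $\varphi_1$ at the distinguished occurrence, with $\chi_2 = c_2(\psi_1,\dots,\psi_k)$ the corresponding subformula of $\varphi_2$. The central claim is that $\ast_\lambda(w(\chi_1)) = w(\chi_2)$ and, symmetrically, $\ast_\mu(w(\chi_2)) = w(\chi_1)$ — that is, $w(\chi_1)$ and $w(\chi_2)$ are "the same element viewed through $\lambda$". To see this, unwind Definition~\ref{matriz-lambda-mu}: $w(\chi_1) = c_1^{{\bf A}_1}(\ast_\mu(w(\psi_1)),\dots,\ast_\mu(w(\psi_k)))$ lands in $A_1$, while $w(\chi_2) = c_2^{{\bf A}_2}(\ast_\lambda(w(\psi_1)),\dots,\ast_\lambda(w(\psi_k)))$ lands in $A_2$. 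Now observe that $\ast_\lambda \circ \ast_\mu = \lambda$ on $A_1$ and equals the identity-after-$\lambda$ pattern in general; more precisely, for any $a \in A_\circledast$, $\ast_\lambda(\ast_\mu(a)) = \lambda(a)$ if $a \in A_1$ and $= a$ if $a \in A_2$, which is exactly $\ast_\lambda(a)$. Hence $\ast_\lambda(w(\psi_i)) = \ast_\lambda(\ast_\mu(w(\psi_i)))$ for each $i$. Applying the isomorphism hypothesis $(b)$, $\lambda(c_1^{{\bf A}_1}(\vec{b})) = c_2^{{\bf A}_2}(\lambda(b_1),\dots,\lambda(b_k))$ for all $\vec b \in A_1^k$; taking $b_i = \ast_\mu(w(\psi_i))$ gives $\lambda(w(\chi_1)) = c_2^{{\bf A}_2}(\ast_\lambda(w(\psi_i))_i) = w(\chi_2)$. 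Since $w(\chi_1)\in A_1$, $\lambda(w(\chi_1)) = \ast_\lambda(w(\chi_1))$, establishing the claim; the symmetric identity follows from $\mu = \lambda^{-1}$ (hypothesis $(c)$).

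\textbf{Propagating up the formula.} Next I would run an induction on the depth of the distinguished occurrence within $\varphi_1$ to show that, for the whole formulas, $w(\varphi_1)$ and $w(\varphi_2)$ are related by: $w(\varphi_1) = w(\varphi_2)$, \emph{or} ($w(\varphi_1) \in A_i$, $w(\varphi_2) \in A_j$ and each is the $\ast$-image of the other). The base case is the replacement lemma above. For the inductive step, if $\varphi_1 = d(\dots,\theta_1,\dots)$ with the occurrence inside $\theta_1$, then $\varphi_2 = d(\dots,\theta_2,\dots)$; since the outermost truth-function $d^{{\bf A}_\circledast^{(\lambda,\mu)}}$ precomposes every argument with $\ast_\mu$ (if $d \in C_1$) or $\ast_\lambda$ (if $d \in C_2$), and since $\ast_\mu$ and $\ast_\lambda$ each "collapse" the $\lambda$/$\mu$-pair relation to equality — i.e. $\ast_\mu(a) = \ast_\mu(\ast_\lambda(\ast_\mu(a)))$ type identities, which hold because $\ast_\mu\circ\ast_\lambda = \mu$-pattern $= \ast_\mu$ on the relevant domains, using $\mu=\lambda^{-1}$ — the two arguments feed identical inputs into $d$, so $w(\varphi_1) = w(\varphi_2)$ exactly. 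Finally, admissibility $(a)$ guarantees $\ast_\lambda$ and $\ast_\mu$ preserve and reflect designated values, so in the "$\ast$-image of each other" case $w(\varphi_1)\in D_\circledast \iff w(\varphi_2)\in D_\circledast$ as well. Combining, $w(\varphi_1)\in D_\circledast \iff w(\varphi_2)\in D_\circledast$ for every $w$, which is precisely Definition~\ref{definicion-sharing-abstracta}; so $(\lambda,\mu)$ identifies $c_1$ with $c_2$.

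\textbf{Main obstacle.} The delicate point is the bookkeeping in the inductive step: one must carefully track, at each level, whether the current subterm's value lies in $A_1$ or $A_2$ and verify that the ambient $\ast$-map applied by the parent connective really does erase the difference between $w(\varphi_1)$ and $w(\varphi_2)$. This hinges on the semigroup-like identities relating $\ast_\lambda$, $\ast_\mu$, $\lambda$ and $\mu=\lambda^{-1}$ (e.g. $\ast_\lambda\circ\ast_\mu = \ast_\lambda$ and $\ast_\mu\circ\ast_\lambda = \ast_\mu$ on $A_\circledast$), which are elementary but must be stated cleanly before the induction to keep the argument readable. Hypothesis $(b)$ — that $\lambda$ is an \emph{isomorphism} of the $c_i$-fragments, not merely a homomorphism — is used essentially in the base case and cannot be weakened, while $(a)$ and $(c)$ are what let the relation survive being pushed through arbitrary surrounding context and through the designated-set test.
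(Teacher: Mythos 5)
Your proof is correct and follows essentially the same route as the paper's: the paper's Proposition~\ref{sharing-tecnico-2} establishes exactly your invariant (in the equivalent form $\ast_{\lambda}(\psi_1(\vec a))=\ast_{\lambda}(\psi_2(\vec a))$ and $\ast_{\mu}(\psi_1(\vec a))=\ast_{\mu}(\psi_2(\vec a))$) by induction, using the same identities from Proposition~\ref{sharing-tecnico-1} together with the isomorphism hypothesis at the replaced occurrence, and admissibility is likewise invoked only at the final designation test. The only cosmetic difference is that you reduce to a single close-association step and induct on the depth of the replaced occurrence, whereas the paper inducts directly on the complexity of a pair of (possibly multiply differing) associated formulas; both work.
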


\noindent That is,  $(\lambda,\mu)$ identifies $c_1$ with $c_2$ if $\lambda$ is a  {\em bijective strict homomorphism} (or, shortly, a matrix isomorphism, cf. \cite{cze:01}) from ${M_1}/{c_1}$ to ${M_2}/{c_2}$, being 
$\mu = {\lambda}^{-1}$. Note that if the conditions of this theorem are valid, then the obtained fibring will be aditionally a strong and conservative one, cf 
Proposition \ref{extendcons}.  We will make use of the functions $\ast_{\lambda}$ and $\ast_{\mu}$ given in Definition \ref{funciones-lambda-mu}, and the ``truth-tables'' induced by that functions, in Definition \ref{matriz-lambda-mu}. First of all, we can demonstrate the following technical result:

\begin{proposition} \label{sharing-tecnico-1} Consider the $C_i$-matrices $M_i = ({\bf A}_i,D_i)$, $(\lambda,\mu)$ a fibring pair, and the matrix $M_{(\lambda,\mu)} = ({\bf A_{\circledast}^{(\lambda,\mu)}},D_{\circledast})$, defined as usual. Then:
\begin{itemize}
  \item[(a)]For every $a \in A_1$, $\ast_{\lambda}(a) = \lambda (\ast_{\mu}(a))$. 
  \item[(b)] If $\lambda$ is a bijection and $\lambda^{-1} = \mu$, then  $\ast_{\lambda}(a) = \lambda (\ast_{\mu}(a))$ for every $a \in A_{\circledast}$.
\item[(c)]For every $a \in A_2$, $\ast_{\mu}(a) = \mu (\ast_{\lambda}(a))$. 
  \item[(d)] If $\lambda$ is a bijection and $\lambda^{-1} = \mu$, then $\ast_{\mu}(a) = \mu (\ast_{\lambda}(a))$ for every $a \in A_{\circledast}$.
\end{itemize}
\end{proposition}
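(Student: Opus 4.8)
The plan is to establish the four identities by unwinding the definitions of $\ast_{\lambda}$ and $\ast_{\mu}$ (Definition~\ref{funciones-lambda-mu}) and splitting on whether the argument belongs to $A_1$ or to $A_2$, recalling that $A_{\circledast} = A_1 \uplus A_2$ is a genuine disjoint union, so each argument lies in exactly one of the two summands and the relevant branch of each definition is thereby determined. Items $(a)$ and $(c)$ concern only one summand each and are immediate; items $(b)$ and $(d)$ upgrade them to all of $A_{\circledast}$ by treating the remaining summand, which is the only place where the hypothesis $\mu = \lambda^{-1}$ enters.

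For $(a)$, if $a \in A_1$ then $\ast_{\mu}(a) = a$ and $\ast_{\lambda}(a) = \lambda(a)$, so $\lambda(\ast_{\mu}(a)) = \lambda(a) = \ast_{\lambda}(a)$. Dually, for $(c)$, if $a \in A_2$ then $\ast_{\lambda}(a) = a$ and $\ast_{\mu}(a) = \mu(a)$, whence $\mu(\ast_{\lambda}(a)) = \mu(a) = \ast_{\mu}(a)$. For $(b)$, by $(a)$ it suffices to check $a \in A_2$: there $\ast_{\lambda}(a) = a$ and $\ast_{\mu}(a) = \mu(a) \in A_1$, so $\lambda(\ast_{\mu}(a)) = \lambda(\mu(a)) = a$, using that $\lambda \circ \mu = \mathrm{id}_{A_2}$ because $\lambda$ is a bijection with inverse $\mu$; hence $\lambda(\ast_{\mu}(a)) = \ast_{\lambda}(a)$. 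Item $(d)$ is symmetric: by $(c)$ it remains to check $a \in A_1$, where $\ast_{\mu}(a) = a$ and $\ast_{\lambda}(a) = \lambda(a) \in A_2$, so $\mu(\ast_{\lambda}(a)) = \mu(\lambda(a)) = a = \ast_{\mu}(a)$ using $\mu \circ \lambda = \mathrm{id}_{A_1}$.

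I do not anticipate a genuine obstacle here; the proof is a routine case analysis. The only mild subtlety worth flagging is that the hypothesis ``$\lambda$ is a bijection and $\lambda^{-1} = \mu$'' must be used in its full strength, namely that \emph{both} composites $\lambda \circ \mu$ and $\mu \circ \lambda$ are the respective identity maps (equivalently, $|A_1| = |A_2|$ and $\mu$ is a two-sided inverse of $\lambda$); part $(b)$ uses the first composite and part $(d)$ the second. Beyond that, the argument is a one-line computation in each of the two cases, and it should be written out compactly, perhaps with $(a)$ and $(c)$ merged and $(b)$ and $(d)$ merged by the evident left--right symmetry between $\lambda$ and $\mu$.
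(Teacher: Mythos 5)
Your proof is correct and follows essentially the same route as the paper's: unwind Definition~\ref{funciones-lambda-mu}, note that for $(a)$ and $(c)$ the relevant branch makes the identity immediate, and for $(b)$ and $(d)$ handle the remaining summand using $\lambda\circ\mu=\mathrm{id}_{A_2}$ and $\mu\circ\lambda=\mathrm{id}_{A_1}$ respectively. No issues to report.
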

\begin{proof}
For $(a)$: if $a \in A_1$, we have that $\ast_{\mu}(a) = a$. So, $\lambda(\ast_{\mu}(a)) = \lambda(a)$ =  $\ast_{\lambda}(a)$ (since $a\in A_1$, again). For $(b)$: we just must consider when $a \in A_2$. In this case, $\ast_{\lambda}(a) = a$. On the other hand, $\ast_{\mu}(a) = \mu(a)$ and so, $\lambda(\ast_{\mu}(a)) = \lambda(\mu(a)) = a$, by hypothesis. The cases $(c)$ and $(d)$ are similar. 
\end{proof}

\begin{proposition} \label{sharing-tecnico-2} Suppose $c_1 \in C_1^k$ and $c_2 \in C_2^k$, and let $(\lambda,\mu)$ be a fibring pair satisfying:
    
\noindent $(i)$ $\lambda: {\bf A_1} /c_1 \longrightarrow {\bf A_2} / c_2$  is an isomorphism. 

\noindent $(ii)$ $\mu = {\lambda}^{-1}$.

\noindent Then, for every pair of formulas $\psi_1$, $\psi_2  \in L(C_{\circledast})\setminus \V$ depending of $m$ atomic formulas, such that 
$\psi_1 \equiv_{(c_1,c_2)} \psi_2$, for every $\vec{a} = (a_1,\dots,a_m) 
\in {(A_{\circledast})}^m$, it holds:

\noindent $(\star.1)$: $\ast_{\lambda}(\psi_1^{\bf A_{\circledast}^{(\lambda,\mu)}}(\vec{a})) = \ast_{\lambda}(\psi_2^{\bf A_{\circledast}^{(\lambda,\mu)}}(\vec{a}))$ 

\noindent $(\star.2)$: $\ast_{\mu}(\psi_1^{\bf A_{\circledast}^{(\lambda,\mu)}}(\vec{a})) = \ast_{\mu}(\psi_2^{\bf A_{\circledast}^{(\lambda,\mu)}}(\vec{a}))$.
\end{proposition}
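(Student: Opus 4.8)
The plan is to prove $(\star.1)$ and $(\star.2)$ simultaneously by induction on the structure of a $(c_1,c_2)$-association chain, reducing first to the case where $\psi_1$ and $\psi_2$ are \emph{closely} $(c_1,c_2)$-associated. Since $\equiv_{(c_1,c_2)}$ is the transitive closure of the close-association relation, if I establish $(\star.1)$ and $(\star.2)$ whenever $\psi_1$ is closely $(c_1,c_2)$-associated to $\psi_2$, then chaining the equalities along a witnessing sequence $(\psi_0,\dots,\psi_r)$ yields the result for arbitrary $(c_1,c_2)$-associated $\psi_1,\psi_2$. So the real work is the close-association case, in which (up to symmetry, using clauses $(ii)$ and $(iii)$ of Definition~\ref{formulas-asociadas}) $\psi_2$ is obtained from $\psi_1$ by replacing exactly one occurrence of $c_1$ by $c_2$. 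The case $\psi_1=\psi_2$ is trivial.

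Next I would do an inner induction on the complexity of $\psi_1$, locating the distinguished occurrence of $c_1$ that gets replaced. Write $\psi_1 = d(\chi_1,\dots,\chi_n)$ for the outermost connective $d$. If the replaced occurrence of $c_1$ lies strictly inside some argument $\chi_t$, then $\psi_2 = d(\chi_1,\dots,\chi_t',\dots,\chi_n)$ where $\chi_t$ is closely $(c_1,c_2)$-associated to $\chi_t'$ and $\chi_s'=\chi_s$ for $s\neq t$. By the inner induction hypothesis, $\ast_{\lambda}(\chi_t^{\bf A_{\circledast}^{(\lambda,\mu)}}(\vec a)) = \ast_{\lambda}(\chi_t'^{\bf A_{\circledast}^{(\lambda,\mu)}}(\vec a))$ and likewise for $\ast_{\mu}$. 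Now examine the definition of $d^{\bf A_{\circledast}^{(\lambda,\mu)}}$ from Definition~\ref{matriz-lambda-mu}: if $d\in C_1^n$ the value is $d^{\bf A_1}(\ast_\mu(\cdot),\dots)$, so the $t$-th input is fed through $\ast_\mu$ before $d^{\bf A_1}$ is applied; if $d\in C_2^n$ it is fed through $\ast_\lambda$. Either way the argument enters only via $\ast_\mu$ or $\ast_\lambda$, so the inner I.H.\ gives $\psi_1^{\bf A_{\circledast}^{(\lambda,\mu)}}(\vec a) = \psi_2^{\bf A_{\circledast}^{(\lambda,\mu)}}(\vec a)$ outright, and a fortiori $(\star.1)$ and $(\star.2)$ hold. (One subtlety: when $\chi_t$ or $\chi_t'$ is a propositional variable this sub-case cannot arise, since by Remark~\ref{asociadas-relacion-equivalencia} a variable is $(c_1,c_2)$-associated only to itself; when it is non-atomic, one uses Proposition~\ref{sharing-tecnico-1}(a)/(c) — the identities $\ast_\lambda = \lambda\circ\ast_\mu$ on $A_1$ and $\ast_\mu=\mu\circ\ast_\lambda$ on $A_2$ — to match up the two $\ast$-versions of the sub-induction hypothesis, noting that the value of a non-atomic formula lies in $A_1$ or $A_2$ according to its outermost connective.)

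The genuinely new case, and the main obstacle, is when the \emph{outermost} $d$ is the replaced occurrence: $\psi_1 = c_1(\chi_1,\dots,\chi_k)$ and $\psi_2 = c_2(\chi_1,\dots,\chi_k)$ with the $\chi_s$ unchanged. Then $\psi_1^{\bf A_{\circledast}^{(\lambda,\mu)}}(\vec a) = c_1^{\bf A_1}(\ast_\mu(\chi_1(\vec a)),\dots,\ast_\mu(\chi_k(\vec a)))$, which lies in $A_1$, while $\psi_2^{\bf A_{\circledast}^{(\lambda,\mu)}}(\vec a) = c_2^{\bf A_2}(\ast_\lambda(\chi_1(\vec a)),\dots,\ast_\lambda(\chi_k(\vec a)))$, which lies in $A_2$. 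Applying $\ast_\lambda$ to the first: on $A_1$, $\ast_\lambda=\lambda$, so I get $\lambda(c_1^{\bf A_1}(\ast_\mu(\chi_1(\vec a)),\dots)) = c_2^{\bf A_2}(\lambda(\ast_\mu(\chi_1(\vec a))),\dots)$ because $\lambda$ is an isomorphism from ${\bf A}_1/c_1$ to ${\bf A}_2/c_2$ (hypothesis $(i)$), and then $\lambda\circ\ast_\mu=\ast_\lambda$ on all of $A_{\circledast}$ by Proposition~\ref{sharing-tecnico-1}(b) (using $\mu=\lambda^{-1}$). This gives exactly $\ast_\lambda(\psi_1^{\bf A_{\circledast}^{(\lambda,\mu)}}(\vec a)) = \psi_2^{\bf A_{\circledast}^{(\lambda,\mu)}}(\vec a) = \ast_\lambda(\psi_2^{\bf A_{\circledast}^{(\lambda,\mu)}}(\vec a))$, the last step since $\ast_\lambda$ is the identity on $A_2$; this is $(\star.1)$. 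Symmetrically, applying $\ast_\mu$ and using that $\mu=\lambda^{-1}$ is the inverse isomorphism together with Proposition~\ref{sharing-tecnico-1}(d), I get $(\star.2)$. Finally I would handle clause $(iii)$ of the close-association definition (where $\psi_1$ comes from $\psi_2$) by the same argument with the roles swapped, and then assemble the chain to finish. The only thing to watch is bookkeeping of which ambient set ($A_1$ versus $A_2$) each intermediate value lives in, so that the right branch of the definitions of $\ast_\lambda$, $\ast_\mu$ and of the fibred truth-functions is invoked; Proposition~\ref{sharing-tecnico-1} is precisely what smooths this over.
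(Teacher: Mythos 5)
Your proof is correct, but it is organized differently from the paper's. The paper runs a single induction on the number of connectives of $\psi_1$, decomposing both formulas at the top level as $\psi_1=d(\theta^1_1,\dots,\theta^1_p)$, $\psi_2=d'(\theta^2_1,\dots,\theta^2_p)$ with \emph{all} arguments pairwise $(c_1,c_2)$-associated, and then splitting into the cases $d=c_1$, $d=c_2$, $d\notin\{c_1,c_2\}$; the crucial computation in the case $d=c_1$, $d'=c_2$ combines the isomorphism hypothesis, Proposition~\ref{sharing-tecnico-1} and the induction hypothesis on the arguments, exactly as in your outermost-replacement case. You instead first reduce to \emph{closely} associated pairs by chaining the equalities $(\star.1)$, $(\star.2)$ along a witnessing sequence (legitimate, since close association preserves non-atomicity and the variables involved), and then do a structural induction that tracks the single replaced occurrence, so that in your sub-case B the arguments of $c_1$ and $c_2$ coincide and no induction hypothesis is needed there, while in sub-case A only one argument changes and the value is outright preserved. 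What your route buys is that the paper's unproved structural claim --- that arbitrary $(c_1,c_2)$-associated non-atomic formulas decompose at the top into connectives agreeing up to $c_1\leftrightarrow c_2$ with pairwise associated arguments --- becomes transparent, since after your reduction only one occurrence differs; what the paper's route buys is a single induction with no preliminary reduction. One small remark: in your sub-case A the appeal to Proposition~\ref{sharing-tecnico-1}(a)/(c) is not actually needed, because the inner induction hypothesis already supplies both the $\ast_\lambda$- and the $\ast_\mu$-versions of the equality, and the outer connective consumes its $t$-th argument through exactly one of $\ast_\mu$ or $\ast_\lambda$ according to whether it lies in $C_1$ or $C_2$.
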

\begin{proof}
Our claim will be proved by induction on $n$, the number of connectives appearing in $\psi_1(\vec{p})$, with $\vec{p} = (p_1,\dots,{p}_m)$, with $n \geq 1$\footnote{We do not consider $n = 0$ because, in this case, $\psi_1$  = $\psi_2 \in \V$ (because of Remark~\ref{asociadas-relacion-equivalencia}), and then $(\star.1)$ and $(\star.2)$ are trivially valid. So, the relevant fact of the proof of this proposition just appear when $n \geq 1$.} (note that these connectives belong to $C_{\circledast}$). To simplify notation we will write $\psi(\vec a)$ and $d(\vec x)$ instead of  $\psi^{\bf A_{\circledast}^{(\lambda,\mu)}}(\vec a)$ and $d^{\bf A_{\circledast}^{(\lambda,\mu)}}(\vec{x})$, respectively, for a formula $\psi$ and a $m$-ary connective $d$.

Suppose that $n = 1$. Then, $\psi_1 = d(\vec{p})$, with $d \in C_{\circledast}$. If $c_1 \neq d \neq c_2$, then $\psi_1 = \psi_2$, and our claim is valid. If $d = c_1$, then $\psi_1(\vec{a}) = c_1(\vec{a})$. 
We must just analyze when $\psi_2(\vec{a}) = c_2(\vec{a})$.  
Since $\psi_1(\vec{a}) \in A_1$, and using $(i)$, we have that $\ast_{\lambda}(\psi_1(\vec{a})) = \lambda(c_1(a_1,\dots,a_k)) = \lambda(c_1(\ast_{\mu}(a_1),\dots,\ast_{\mu}(a_k))) = c_2(\lambda(\ast_{\mu}(a_1)),\dots,\lambda(\ast_{\mu}(a_k))) = c_2(\ast_{\lambda}(a_1),\dots,\ast_{\lambda}(a_k)) = c_2(\vec{a})$ (by the previous proposition and Definition \ref{matriz-lambda-mu}). Now, since $c_2 \in C_2^k$, $c_2(\vec{a}) = \ast_{\lambda}(c_2(\vec{a}))$ = 
$\ast_{\lambda}(\psi_2(\vec{a}))$. If $d = c_2$, the reasoning is similar. So, $(\star .1)$ is valid.

Moreover, $(\star . 2)$ can be proved adapting the same patterns applied above, and using the additional hypothesis $(ii)$. 

Now suppose that $(\star.1)$ and $(\star.2)$ are valid for every formula with $t$ connectives ($t < n$) and let us prove that it is valid for every formula $\psi_1$ with $n$ connectives. Since $\psi_1 \equiv_{(c_1,c_2)} \psi_2$, we can consider that $\psi_1 = d(\theta_1^1,\dots,\theta_p^1)$ and $\psi_2 = d{'}(\theta_1^2,\dots,\theta_p^2)$, with $d$, $d{'} \in C_{\circledast}^p$, and with $\theta_i^1 = \theta_i^1(\vec{x})$, $\theta_i^2 = \theta_i^2(\vec{x})$ and
$\theta_i^1 \equiv_{(c_1,c_2)}\theta_i^2$, for every $i=1,\dots,k$. We have the following cases:

\noindent {\bf Case 1}: $d = c_1$ (and $p = k$). We must analyze when $d{'} = c_2$ and when $d{'} = c_1$. In the first 
case, $\psi_2 = c_2(\theta_1^2,\dots,\theta_k^2)$. To prove $(\star .1)$ we have:
$\ast_{\lambda}(\psi_1(\vec{a}))$ = 
$\ast_{\lambda} (c_1 (\theta_1^1(\vec{a}),\dots,\theta_k^1(\vec{a}))$ = 
$\lambda(c_1(\theta_1^1(\vec{a}),\dots,\theta_k^2(\vec{a})))$, which (by Definition \ref{matriz-lambda-mu}) is $\lambda(c_1(\ast_{\mu}(\theta_1^1(\vec{a})),\dots,\ast_{\mu}(\theta_k^1(\vec{a}))))$ =  
$c_2(\lambda(\ast_{\mu}(\theta_1^1(\vec{a}))),\dots,\lambda(\ast_{\mu}(\theta_k^1(\vec{a}))))$ =
$c_2(\ast_{\lambda}(\theta_1^1(\vec{a})),\dots,\ast_{\lambda}(\theta_k^1(\vec{a})))$ (using $(i)$ and Proposition \ref{sharing-tecnico-1}). Applying Induction Hypothesis (case $(\star.1)$) and 
Definition \ref{matriz-lambda-mu}, we have then $\ast_{\lambda}(\psi_1(\vec{a}))$ =  $c_2(\ast_{\lambda}(\theta_1^2(\vec{a})),\dots,\ast_{\lambda}(\theta_k^2(\vec{a})))$ =
$c_2(\theta_1^2(\vec{a}),\dots,\theta_k^2(\vec{a}))$ = 
$\ast_{\lambda} (c_2(\theta_1^2(\vec{a}),\dots,\theta_k^2(\vec{a})))$ = 
$\ast_{\lambda}(\psi_2(\vec{a}))$, because $c_2 \in C_2^k$.  
To prove  $(\star.2)$ proceed in the same way (applying $(ii)$, and Induction Hypothesis, case $(\star.2)$, here). 

If $d{'} = c_1$, our claim is proved by an adaptation of the explanation above.

\noindent {\bf Case 2}: $d = c_2$. Similar to case 1.

\noindent {\bf Case 3}: $d \neq c_1$, $d \neq c_2$. So, $\psi_1 = d(\theta_1^1,\dots,\theta_p^1)$, and $\psi_2 = d(\theta_1^2,\dots,\theta_p^2)$. 
If $d \in C_1^p$ then by Induction Hypothesis (case $(\star.2)$), 
$\ast_{\mu}(\theta_i^1(\vec{a})) = \ast_{\mu}(\theta_i^2(\vec{a}))$, for $i = 1,\dots,p$. Then,
$d(\ast_{\mu}(\theta_1^1(\vec{a})),\dots,\ast_{\mu}(\theta_p^1(\vec{a}))) = d(\ast_{\mu}(\theta_1^2(\vec{a})),\dots,\ast_{\mu}(\theta_p^2(\vec{a})))$, and so   
$d(\theta_1^1(\vec{a}),\dots,\theta_p^1(\vec{a})) = d(\theta_1^2(\vec{a}),\dots,\theta_p^2(\vec{a}))$ (because $d \in C_1^p$). That is, $\psi_1(\vec{a})$ = 
$\psi_2(\vec{a})$, and then $(\star.1)$ and $(\star.2)$ are both valid. Finally, if $d \in C_2^p$, proceed in the same way, applying Induction Hypothesis (case $(\star.1)$, here). This concludes the proof.
\end{proof}

\noindent Note that the admissibility condition is not used in the previous result. Conditions $(i)$ and $(ii)$ of this proposition correspond to conditions $(b)$ and $(c)$ of Theorem \ref{sharing-fundamental-1}, respectively.

Theorem \ref{sharing-fundamental-1} can be demonstrated now:

\

{\noindent {\bf Proof of Theorem \ref{sharing-fundamental-1}:}
Let $\psi_1$, $\psi_2 \in L(C_{\circledast})$ be such that $\psi_1 \equiv_{(c_1,c_2)} \psi_2$. When $\psi_1 \in \V$ our claim is trivial (because $\psi_1 = \psi_2$). Now, 
suppose that $\psi_1 = d(\theta_1^1,\dots,\theta_p^1)$ (where $d \in C_1^p$, without losing generality). Then, $\psi_2 = d{'}(\theta_1^2,\dots,\theta_p^2)$, with $d{'} \in C_1^p \cup C_2^p$, $\theta_i^1 \equiv_{(c_1,c_2)} \theta_i^2$, for every $i = 1,\dots,k$.
Let us prove that $\psi_1(\vec{a}) \in D_{\circledast}$ implies $\psi_2(\vec{a}) \in D_{\circledast}$ (we are using the same notational convention of Proposition \ref{sharing-tecnico-2}). The proof will be done by the following analysis of cases:

\noindent {\bf Case 1}: $d = c_1$. So, $p = k$ and $\psi_1(\vec{a}) = c_1(\ast_{\mu}(\theta_1^1(\vec{a})),\dots, \ast_{\mu}(\theta_k^1(\vec{a})))$ $(\star)$. 

\noindent {\bf Subcase 1.1}: $d{'} = c_2$. Since 
$\psi_1(\vec{a}) \in D_1$ and $(\lambda,\mu)$ is admissible, 
$\lambda(\psi_1(\vec{a})) \in D_2$. In addition, using $(\star)$, $\lambda(\psi_1(\vec{a}))$ = 
$c_2(\lambda(\ast_{\mu}(\theta_1^1(\vec{a}))),\dots,\lambda(\ast_{\mu}(\theta_k^1(\vec{a}))))$. Thus, $c_2(\ast_{\lambda}(\theta_1^1(\vec{a})),\dots,\ast_{\lambda}(\theta_k^1(\vec{a}))) \in D_2$, by Proposition \ref{sharing-tecnico-1}. By the hypotheses and Proposition \ref{sharing-tecnico-2}, we have that 
$c_2(\ast_{\lambda}(\theta_1^2(\vec{a})),\dots,\ast_{\lambda}(\theta_k^2(\vec{a})))$ = 
$c_2(\theta_1^2(\vec{a}),\dots,\theta_k^2(\vec{a})) = \psi_2(\vec{a}) \in D_2 \su D_{\circledast}$.

\noindent {\bf Subcase 1.2}: $d{'} = c_1$.  By Proposition \ref{sharing-tecnico-2} and $(\star)$ again, we have here that $\psi_1(\vec{a})$ = 
$c_1(\ast_{\mu}(\theta_1^2(\vec{a})),\dots, \ast_{\mu}(\theta_k^2(\vec{a}))) = \psi_2(\vec{a})$. So 
$\psi_1(\vec{a}) \in D_{\circledast}$ implies $\psi_2(\vec{a}) \in D_{\circledast}$, obviously.

\noindent {\bf Case 2}: $d \neq c_1$. So, $\psi_2 = d(\theta_1^2,\dots,\theta_p^2)$ and then $\psi_1(\vec{a})$ = 
$d(\theta_1^1(\vec{a}),\dots,\theta_p^1(\vec{a})) = d(\ast_{\mu}(\theta_1^1(\vec{a})),\dots,\ast_{\mu}(\theta_p^1(\vec{a})))$ = 
$d(\ast_{\mu}(\theta_1^2(\vec{a})),\dots,\ast_{\mu}(\theta_p^2(\vec{a})))$ =

\noindent $d(\theta_1^2(\vec{a}),\dots,\theta_p^2(\vec{a})) = \psi_2(\vec{a})$ and our claim is therefore valid.
 The converse 
of the already proved implication 
is similar.
\hfill $\Box$

\begin{example} \label{ejemplo-InPk-sharing} Recall from Example \ref{InPk} the fibring by functions of the logics $I^n$ and $P^k$. Now, let us try to apply identification of connectives in such fibring. From Theorem \ref{sharing-fundamental-1} it suffices to ask that $\lambda_i$ and $\mu_j$ be bijective, $\lambda_i^{-1} = \mu_j$.
 But note that, if $n \geq 1$, then $\lambda_i$ is not injective. A similar fact happens whith $\mu_j$ when $k \geq 1$.
Now, when $n = k = 0$ we have only the
admissible pair $(\lambda_0,\mu_0)$, and since it is easy to verify that $\lambda_0: M/_{\rar_{I^0}} \, \lra M \, /_{\rar_{P^0}}$ is a matrix isomorphism and that ${\lambda_0}^{-1} = \mu_0$, then $\rar_{P_0}$ can be identified with $\rar_{I^0}$. A similar result is valid for $\lambda_0: M / \neg_{I^0} \lra M / \neg_{P^0}$. So, $\neg_{I^0}$ and $\neg_{P^0}$  can be also identified (by the way, the pair $(\lambda_0, \mu_0)$ determines a conservative fibring, according to Proposition~\ref{extendcons}).  The truth-tables of the matrix $M_{(\lambda_0,\mu_0)}$ are particular cases of Example \ref{InPk}. The resulting $C_{\circledast}$-matrix
 $M_{(\lambda_0,\mu_0}$, with $C_{\circledast} = \{\n_{I_0}, \n_{P^0}, \rar_{I_0}, \rar_{P_0}\}$ is just the ``classical logic $CL$ with duplicated truth-values''. Anyway, $C_{\circledast}$ can be reduced, according to previous comments, in such a way that $C_{\circledast} = \{\n_{I_0}, \rar_{I_0}\}$, for instance.
\hfill $\bsq$  
\end{example}

\noindent Let us see now some examples of fibring applied to logics that are better known in the literature. The next one is an analysis of the fibring of the $3$-valued \L ukasiewicz and G\"odel logics. 

The definitions given in the sequel are based on \cite{haj:98}.

\begin{example} \label{luka-godel}
Consider the conjunctive fragments of the logics $\L_3$ and $G_3$ (of \L ukasiewicz and G\"odel, resp.). To simplify notation, $\L_3$ will be denoted by $\lc_a$ and $G_3$ by $\lc_b$ (these subscripts will be applied to all the components of $\L_3$ and $G_3$, such as signatures, matrices, truth-functions and so on). 
The involved signatures are $C_{a}$:=$\{\&_{a} \}$ and $C_b$:=$\{\&_{b}\}$, that are interpreted in the matrices 
$M_{\&_a} = ({\bf A}_{a},D_{a})$ and $M_{\&_b} = ({\bf A}_{b},D_{b})$, with $A_a:= \{ 0_a, {\frac{1}{2}}_a, 1_a \}$, $D_a$:=$\{1_a\}$, $A_b$:= 
$\{ 0_b, {\frac{1}{2}}_b, 1_b \}$, $D_b = \{1_b\}$. The respective truth-funcions are indicated below.

$$
\begin{array}{|c|c c c|} \hline
\&_{a}  & 0_{a} & {\frac{1}{2}}_{a} & 1_{a} \\ \hline 0_{a} & 0_{a} & 0_{a} & 0_{a} \\
{\frac{1}{2}}_{a} & 0_{a} & 0_{a} & {\frac{1}{2}}_{a}\\
1_{a} & 0_{a} & {\frac{1}{2}}_{a} & 1_{a} \\ \hline \end{array}
\hspace{2cm}
\begin{array}{|c|c c c|} \hline
\&_{b}  & 0_{b} & {\frac{1}{2}}_{b} & 1_{b} \\ \hline 0_{b} & 0_{b} & 0_{b} & 0_{b} \\
{\frac{1}{2}}_{b} & 0_{b} & {\frac{1}{2}}_{b} & {\frac{1}{2}}_{b}\\
1_{b} & 0_{b} & {\frac{1}{2}}_{b} & 1_{b} \\ \hline
\end{array}$$

\

\noindent Suppose now that we wish to apply Theorem \ref{sharing-fundamental-1} to obtain fibring pairs $(\lambda,\mu)$ in such a way that $\&_a$ and $\&_b$ could be identified. We would restrict our analysis to the following maps (which will determine $4$ fibring pairs):

\

\bc

$\lambda_1(0_a) = 0_b$; $\lambda_1({\frac{1}{2}}_a) = {\frac{1}{2}}_b$; $\lambda_1(1_a) = 1_b$.

\

$\lambda_2(0_a) = {\frac{1}{2}}_b$; $\lambda_2({\frac{1}{2}}_a) = 0_b$; $\lambda_2(1_a) = 1_b$.

\

$\mu_1(0_b) = 0_a$; $\mu_1({\frac{1}{2}}_b) = {\frac{1}{2}}_a$; $\mu_1(1_b) = 1_a$.

\

$\mu_2(0_b) = {\frac{1}{2}}_a$; $\mu_2(\dps {\frac{1}{2}}_b) = 0_a$; $\mu_2(1_b) = 1_a$.

\ec

\noindent So, the  four admissible fibring pairs are $(\lambda_1, \mu_1)$, $(\lambda_2, \mu_2)$, $(\lambda_1, \mu_2)$ and $(\lambda_2,\mu_1)$. Every one of these pairs defines a different matrix interpreting two connectives (since $C_{\circledast} = \{\&_a, \&_b \}$) and having six elements in its domain. Let us comment some properties of each matrix obtained by each fibring pair: 

\noindent $i)$ $M_{(\lambda_1,\mu_1)}$: this matrix, obviously, satisfies the conditions of Theorem \ref{sharing-fundamental-1}. So, the connectives $\&_a$ and $\&_b$ can be considered the same one. 

\noindent $ii)$ $M_{(\lambda_2,\mu_2)}$: despite having that $\lambda_2$ is bijective and ${\lambda_2}^{-1} = \mu_2$, $\lambda_2$ is not a matrix homomorphism. In fact, $\lambda_2(0_a \, \&_a \, {\frac{1}{2}}_a)$ =  ${\frac{1}{2}}_b$ and $ \lambda_2(0_a)\, \&_b \, \lambda_2({\frac{1}{2}}_a) = 0_b$. 
However, $(\lambda_2, \mu_2)$ can identify $\&_a$ with $\&_b$. 
To see this, consider first the respective truth-tables in that matrix, which are displayed above.
$$
\begin{array}{|c|c c c c c c |} \hline
\&_{a}  & 0_{a} & {\frac{1}{2}}_{a} & 1_{a} & 0_{b} & {\frac{1}{2}}_{b} & 1_{b} \\ \hline 
0_{a} & 0_{a} & 0_{a} & 0_{a} & 0_a & 0_a & 0_a \\
{\frac{1}{2}}_{a} & 0_{a} & {0}_{a} & {\frac{1}{2}}_{a} & 0_a & 0_a & {\frac{1}{2}}_{a}\\
1_{a} & 0_{a} & {\frac{1}{2}}_{a} & 1_{a} &  {\frac{1}{2}}_{a}  & 0_{a} & 1_a \\ 
0_{b} & 0_{a} & 0_{a} &  {\frac{1}{2}}_{a}  & 0_a & 0_a &  {\frac{1}{2}}_{a} \\
{\frac{1}{2}}_{b} & 0_{a} & 0_a & 0_{a} & 0_a & 0_a & 0_{a}\\
1_{b} & 0_{a} & {\frac{1}{2}}_{a} & 1_{a} &  {\frac{1}{2}}_{a}  & 0_{a} & 1_a \\
\hline
\end{array} \hspace{1.2 cm}
\begin{array}{|c|c c c c c c |} \hline
\&_{b}  & 0_{a} & {\frac{1}{2}}_{a} & 1_{a} & 0_{b} & {\frac{1}{2}}_{b} & 1_{b} \\ \hline 
0_{a} & {\frac{1}{2}}_{b} & 0_{b} & {\frac{1}{2}}_{b} & 0_b & {\frac{1}{2}}_{b} & {\frac{1}{2}}_{b} \\
{\frac{1}{2}}_{a} & 0_{b} & 0_{b} & 0_{b} & 0_b & 0_{b} & 0_{b}\\
1_{a} & {\frac{1}{2}}_{b} & 0_{b} & 1_{b} & 0_b & {\frac{1}{2}}_{b} & 1_b \\ 
0_{b} & 0_{b} & 0_{b} & 0_{b} & 0_b & 0_b & 0_b \\
{\frac{1}{2}}_{b} & {\frac{1}{2}}_{b} & 0_{b} & {\frac{1}{2}}_{b} & 0_b & {\frac{1}{2}}_{b} & {\frac{1}{2}}_{b}\\
1_{b} & {\frac{1}{2}}_{b} & 0_{b} & 1_{b} & 0_b & {\frac{1}{2}}_{b} & 1_b \\
\hline
\end{array}$$

\noindent Now, it can be easily proven, by induction on the number of connectives in $\psi_1$, the following fact:

\noindent $(\star)$ If $\psi_1 \equiv_{(\&_a,\&_b)} \psi_2$, then for every $\vec{a} \in A_{\circledast}$, $\psi_1(\vec{a}) \in D_{\circledast}$ iff $\psi_2(\vec{a}) \in D_{\circledast}$ (as before, $\psi_i(\vec{a})$ abbreviates $\psi_i^{\bf A_{\circledast}^{(\lambda,\mu)}}(\vec{a})$).

That is, even when Theorem \ref{sharing-fundamental-1} cannot be applied, $\&_a$ and $\&_b$ can be shared in $[\lc_a \circledast \lc_b]_{(\lambda_2,\mu_2)}$. 

\noindent $iii)$ The matrix $M_{\lambda_1,\mu_2}$: since $\lambda ^{-1} \neq \mu_2$, the conditions of Theorem \ref{sharing-fundamental-1} are not satisfied. But, as in $ii)$, it can be shown that $\&_a$ and $\&_b$ can be identified.

\noindent $iv)$ The matrix $M_{(\lambda_2,\mu_1)}$: the analysis of this matrix is as in $M_{(\lambda_1,\mu_2)}$. 
\hfill $\bsq$
\end{example}

\noindent The previous example shows that the conditions of Theorem \ref{sharing-fundamental-1}, sufficient for the identification of two connectives, are not neccesary. However, this example is based on very simple languages (with just one connective). To realize the importance of this fact, we conclude this section with a variant of Example \ref{luka-godel} that shows that, in the analysis of the identification of two connectives $c_1$ and $c_2$, we must consider  {\em all the connectives of} $C_{\circledast}$, and not merely $c_1$ and $c_2$.

\begin{example}\label{luka-godel-implicacion}
Consider $\lc_a{'}$ and $\lc_b{'}$ the  {\em conjunctive-implicative} fragments of \L ukasiewicz and G\"odel three-valued logics, respectively. That is, $C_a = \{\&_a,\rar_a\}$, $C_b = \{\&_b, \rar_b\}$ (and therefore 
$C_{\circledast}^{'} = \{\&_a, \&_b, \to_a, \rar_b\}$). In addition, $A_a$ and $A_b$ are as above, and the truth-functions $\&_a$ and $\&_b$ are the same of Example \ref{luka-godel}. With respect to the implications, their associated truth-functions are:

$$
\begin{array}{|c|c c c|} \hline
\rightarrow_{a} & 0_{a} & {\frac{1}{2}}_{a} & 1_{a} \\ \hline 0_{a} & 1_{a} & 1_{a} & 1_{a} \\
{\frac{1}{2}}_{a} & {\frac{1}{2}}_{a} & 1_{a} & 1_{a}\\
1_{a} & 0_{a} & {\frac{1}{2}}_{a} & 1_{a} \\ \hline \end{array}
\hm
\begin{array}{|c|c c c|} \hline
\rightarrow_b & 0_{b} & {\frac{1}{2}}_{b} & 1_{b} \\ \hline 0_{b} & 1_{b} & 1_{b} & 1_{b} \\
{\frac{1}{2}}_{b} & {0}_{b} & 1_{b} & 1_{b}\\
1_{b} & 0_{b} & {\frac{1}{2}}_{b} & 1_{b} \\ \hline \end{array}$$

\

\noindent Consider now the fibring 
$[\lc_1{'} \circledast \lc_2{'}]_{(\lambda_2,\mu_2)}$, determined by the $C_{\circledast}^{'}$-matrix $M_{(\lambda_2,\mu_2)}^{'} = ({\bf A}_{\circledast}^{(\lambda_2,\mu_2)}, D_{\circledast})$: we already have seen that $\lambda_2: M /_{\&_a} \lra M/_{\&_b} $ is not a matrix isomorphism\footnote{Moreover, it can be shown that $\lambda_2$ is not a matrix homomorphism from $M /_{\rar_a}$ to $M /_{\rar_b}$.}, but $\&_a$ and $\&_b$ can be identified within the context of $\lc_a$ and $\lc_b$. But that is not the case if we consider $\lc_a{'}$ and $\lc_b{'}$: let us consider $\psi_1:= p \rar_a (q \, \&_a \, r)$ and $\psi_2$ := $p \rar_a (q \, \&_b \, r)$. Obviously, $\psi_1 \equiv_{(\&_a,\&_b)} \psi_2$. Now, suppose $\vec{a} = ({\frac{1}{2}}_a,0_b,0_a)$, and consider the following tables describing the truth-functions $\rightarrow_a$ and $\rightarrow_b$ in $M_{(\lambda_2,\mu_2)}^{'}$:
$$
\begin{array}{|c|c c c c c c|} \hline
\rightarrow_a & 0_{a} & {\frac{1}{2}}_{a} & 1_{a} & 0_{b} & {\frac{1}{2}}_{b} & 1_{b}\\ \hline 
0_{a} & 1_{a} & 1_{a} & 1_{a} & 1_a & 1_a & 1_a \\
{\frac{1}{2}}_{a} & {\frac{1}{2}}_{a} & 1_{a} & 1_{a} & 1_{a} & {\frac{1}{2}}_{a} & 1_a \\
1_{a} & 0_{a} & {\frac{1}{2}}_{a} & 1_{a} & {\frac{1}{2}}_{a} & 0_{a} & 1_a \\
0_{b} & {\frac{1}{2}}_{a} & 1_{a} & 1_{a} &1_{a} & {\frac{1}{2}}_{a} & 1_a \\
{\frac{1}{2}}_{b} & {1}_{a} & 1_{a} & 1_{a} & 1_a & 1_a & 1_a \\
1_{b} & 0_{a} & {\frac{1}{2}}_{a} & 1_{a} & {\frac{1}{2}}_{a} & 0_a & 1_a \\ \hline \end{array} \hspace{1.2 cm}
\begin{array}{|c|c c c c c c|} \hline
\rightarrow_b & 0_{a} & {\frac{1}{2}}_{a} & 1_{a} & 0_{b} & {\frac{1}{2}}_{b} & 1_{b}\\ \hline 
0_{a} & 1_{b} & 0_{b} & 1_{b} & 0_b & 1_b & 1_b \\
{\frac{1}{2}}_{a} & 1_b & 1_{b} & 1_{b} & 1_b & 1_b & 1_b \\
1_{a} & {\frac{1}{2}}_{b} & 0_{b} & 1_{b} & 0_b & {\frac{1}{2}}_{b} & 1_b \\ 
0_{b} & 1_{b} & 1_{b} & 1_{b} & 1_b & 1_b & 1_b \\
{\frac{1}{2}}_{b} & 1_b & 0_{b} & 1_{b} & 0_b & 1_b & 1_b \\
1_{b} & {\frac{1}{2}}_{b} & 0_{b} & 1_{b} & 0_b & {\frac{1}{2}}_{b} & 1_b \\ \hline \end{array}
$$
\noindent From all the truth-tables displayed above, $\psi_1(\vec{a})$ = 
${\frac{1}{2}}_a \rar_a (0_b \, \&_a \, 0_a) = {\frac{1}{2}}_a \rar_a 0_a = {\frac{1}{2}}_a \notin D_{\circledast}$, meanwhile $\psi_2(\vec{a})$ = 
${\frac{1}{2}}_a \rar_a (0_b \, \&_b \, 0_a) = {\frac{1}{2}}_a \rar_a 0_b = 1_a \in D_{\circledast}$. So, $\&_a$ cannot be identified with $\&_b$ in this case. 
\hfill $\bsq$
\end{example}

\section{Concluding Remarks}\label{final}

Summarizing the results presented in this paper, we have introduced certain techniques for the combination of
logics, by adapting the original formulation of
fibring for modal logics, in this case applied to matrix logics. In this sense, fibring by functions is a simple
generalization, which produces a
weak conservative extension of the given logics, and it can easily be explained by means of the functions $\ast_{\lambda}$ and $\ast_{\mu}$. In order to
obtain a strong extension of the given logics (as it is usually expected for a ``good'' combination technique) it is necessary to
impose additional conditions to the fibring pairs. This originates the so-called strong fibring of logics $[\lc_1 \odot \lc_2]_{(\lambda, \mu)}$ which is, moreover, a conservative extension of both logics. 

On the other hand, we gave special attention to the problem of the identification of connectives, according to similar approaches in the literature (see \cite{ser:ser:cal:99} or \cite[Chapters 5-8]{car:con:gab:gou:ser:08}, where the techniques for constrained fibring are extensively studied, but under a categorical approach). It is  precisely the study of the identification of connectives which provide us some very interesting problems to be analyzed in the future. Among others:
\begin{itemize}
  \item Theorem \ref{sharing-fundamental-1} shows a sufficient condition that guarantees that a fibring pair $(\lambda, \mu)$ can identify two given connectives and, in Example~\ref{luka-godel}, it was shown that this condition is not necessary. An obvious problem here is to find   necessary  and sufficient conditions that characterize sharing of connectives.
  \item As we have remarked, once two connectives are shared, we could eliminate one of them. Which is the mathematical treatment that allows us to explain that elimination, within the context of matrix logics? And, supposing that a reasonable formalism for this process is found, which interesting results are valid from it?
\end{itemize}

Note here that the process of identification, explained in this paper for the treatment of just one 
connective, could be generalized for the identification of   sets of connectives. We conjecture that this generalization is natural enough, and it will be treated in further studies.

Finally, we believe that the techniques here introduced can be
applied to obtain new interesting logics by combination, as it was done in Example~\ref{InPk}. This example, which studies the fibring of the logics $I^n$ and $P^k$ is specially interesting for us, because in previous works we have defined and analyzed the  {\em $I^nP^k$-logics}, by means of other definitions that are not based on fibring (see~\cite{fer:01}, \cite{fer:18} and~\cite{fer:22}). So, a natural question to be studied here is if the mentioned logics $I^n P^k$ can actually be defined by means of $[I^n \circledast P^k]_{(\lambda,\mu)}$,  the fibring  of a specific fibring pair 
$(\lambda,\mu)$. Another interesting class of matrix logics to be studied are in the realm of mathematical fuzzy logic. To this respect, some basic examples were analyzed here in Example~\ref{luka-godel} and, in~\cite{con:fer:05}, we have already presented some results about combinations of fuzzy logics. We believe that mathematical fuzzy logic could be an interesting source of examples.

Finally, the proof-theoretic counterpart of fibring by functions, by means  of Hilbert calculi or sequent calculi, is without doubts a challenging topic that deserves to be studied. In particular, the associated problem of completeness preservation by fibring by functions is one of the topics that should be addressed.

\section*{ Acknowledgements:}  
The first author acknowledges the supports from CICITCA - UNSJ (code F1191) and from CONICET, Argentina (PIP 11220200100912-CO). \\
The second author acknowledges financial
support from the National Council for Scientific and Technological Development (CNPq), Brazil,
under research grant 306530/2019-8.

\bibliographystyle{plain}

\end{document}